\documentclass[12 pt]{amsart}

\usepackage{hyperref}
\usepackage{etex}
\usepackage[shortlabels]{enumitem}
\usepackage{amsmath}
\usepackage{amsxtra}
\usepackage{amscd}
\usepackage{amsthm}
\usepackage{amsfonts}
\usepackage{amssymb}
\usepackage{eucal}
\usepackage[all]{xy}
\usepackage{graphicx}
\usepackage{tikz-cd}
\usepackage{mathrsfs}
\usepackage{subfiles}
\usepackage{mathpazo}
\usepackage{euler}
\usepackage[colorinlistoftodos, textsize=tiny]{todonotes}
\usepackage{morefloats}
\usepackage{pdfpages}
\usepackage{thm-restate}
\usepackage[utf8]{inputenc}
\usepackage{epigraph}
\usepackage{csquotes}
\usepackage[margin=1in]{geometry}

\graphicspath{ {images/} }

\RequirePackage{color}
\definecolor{myred}{rgb}{0.75,0,0}
\definecolor{mygreen}{rgb}{0,0.5,0}
\definecolor{myblue}{rgb}{0,0,0.65}

\usepackage{hyperref}
\hypersetup{citecolor=blue}
\usepackage{tikz}
\usetikzlibrary{matrix,arrows,decorations.pathmorphing}

\theoremstyle{plain}
\newtheorem{theorem}{Theorem}[section]
\newtheorem{proposition}[theorem]{Proposition}
\newtheorem{lemma}[theorem]{Lemma}
\newtheorem{corollary}[theorem]{Corollary}
\theoremstyle{definition}

\newtheorem{remark}[theorem]{Remark}
\newtheorem{example}[theorem]{Example}

\newtheorem{question}[theorem]{Question}

\theoremstyle{remark}

\numberwithin{equation}{section}
  
\newcommand\nc{\newcommand}
\nc\on{\operatorname}
\nc\renc{\renewcommand}

\newcommand\bc{{\mathbb C}}

\newcommand\bq{{\mathbb Q}}
\newcommand\bp{{\mathbb P}}

\newcommand\bz{{\mathbb Z}}
\newcommand\ba{{\mathbb A}}

\newcommand\scc{\mathscr C}

\newcommand\sch{\mathscr H}

\newcommand \ra{\rightarrow}
\newcommand \spec{\text{Spec }}

\DeclareMathOperator\id{id}
\DeclareMathOperator\coker{coker}

\DeclareMathOperator\aut{Aut}

\DeclareMathOperator\im{im}

\DeclareMathOperator\pgl{PGL}
\DeclareMathOperator\gl{GL}

\DeclareMathOperator\ch{char}

\DeclareMathOperator\tp{top}

\DeclareMathOperator\GSp{GSp}

\setcounter{MaxMatrixCols}{20}

\def\listtodoname{List of Todos}
\def\listoftodos{\@starttoc{tdo}\listtodoname}

\title{A Lefschetz theorem for intersections with projective varieties}
\author{Aaron Landesman}

\begin{document}

\maketitle

\begin{abstract}
One version of the classical Lefschetz hyperplane theorem states that for $U \subset \mathbb P^n$ a smooth quasi-projective
variety of dimension at least $2$, and $H \cap U$ a general hyperplane section, the resulting map on \'etale fundamental groups
$\pi_1(H \cap U) \ra \pi_1(U)$ is surjective.
We prove a generalization, replacing the hyperplane by a general $\operatorname{PGL}_{n+1}$-translate
of an arbitrary projective variety:
If $U \subset \bp^n$ is a normal quasi-projective variety,
$X$ is a geometrically
irreducible projective variety of dimension at least $n + 1 - \dim U$, and $Y$ is a general $\operatorname{PGL}_{n+1}$-translate
of $X$,
then the map $\pi_1(Y \cap U) \ra \pi_1(U)$ is surjective.
\end{abstract}

\section{Introduction}
One version of the well known Lefschetz hyperplane theorem states that if we take a smooth projective scheme $U \subset \bp^n$
and intersect it with a plane $H$ of dimension at least $n+1-\dim U$, the map of \'etale fundamental groups $\pi_1(H \cap U) \ra \pi_1(U)$ is surjective
\cite[XII, Corollary 3.5]{sga2}.
There is also a generalization of this to quasi-projective schemes:
if we take a smooth quasi-projective scheme over the complex numbers $U \subset \bp^n$
and intersect it with a {\it general} plane $H$ of dimension at least $n+1-\dim U$, the map $\pi_1(H \cap U) \ra \pi_1(U)$ is surjective
\cite[Part II, Theorem 1.2]{goresky1988stratified}.
In this article, we further generalize the Lefschetz hyperplane theorem from a statement regarding the intersection with a general hyperplane to a statement
regarding the intersection with a general $\pgl_{n+1}$-translate of an arbitrary geometrically irreducible projective scheme of appropriate dimension.

For $B$ a $k$-scheme with a $\pgl_{n+1}(k)$ action
and $\sch \subset B \times \bp^n$ a closed subscheme, we say
$\sch$ is {\em $\pgl_{n+1}(k)$-invariant} if the $\pgl_{n+1}(k)$ action
on $B \times \bp^n$ (acting via automorphisms on $\bp^n$) 
sends $\sch$ to $\sch$.

\begin{theorem}
	\label{theorem:hilbert-scheme-galois-image}
	Let $k$ be any field of characteristic $0$ and let $U \subset \bp^n$
	be a quasi-projective normal geometrically connected scheme.
	Let $B$ be a $k$-scheme locally of finite type with a
$\pgl_{n+1}(k)$ action and $\sch \subset B \times \bp^n$ be a closed subscheme.
	Suppose further that the resulting map $\sch \ra B$ has geometrically irreducible fibers of dimension at least
	$n + 1- \dim U$ and is $\pgl_{n+1}(k)$-invariant.
	Then, there is a dense open subscheme $B^0 \subset B$
	so that for all $b \in B^0$, the map
	$\pi_1( \sch_b \cap U) \ra \pi_1(U)$ is surjective.
	\end{theorem}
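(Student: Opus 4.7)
The plan is to reduce to the case where $B$ is integral and geometrically irreducible (by restricting to an irreducible component) and shrink $B$ so that $\sch \to B$ is flat. Recall that surjectivity of $\pi_1(\sch_b \cap U) \to \pi_1(U)$ is equivalent to the condition that for every connected finite étale cover $V \to U$, the fiber product $V \times_U (\sch_b \cap U)$ is connected. My strategy is to first establish this condition at the geometric generic point $\bar\eta$ of $B$ using the $\pgl_{n+1}$-invariance, and then propagate it to a dense open via the specialization map for étale fundamental groups.

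Setting $W := \sch \cap (B \times U) \subset B \times U$, I claim that $W \to U$ has geometrically irreducible fibers. The fiber over $u \in U$ equals the fiber of $\sch \to \bp^n$ over $u$; by the $\pgl_{n+1}$-invariance of $\sch$ and the transitivity of $\pgl_{n+1}$ on $\bp^n$, these fibers are $\pgl_{n+1}$-translates of one another, hence isomorphic. The morphism $\sch \to \bp^n$ is surjective, since its image is $\pgl_{n+1}$-invariant and constructible, hence all of $\bp^n$. Moreover, $\sch$ is geometrically irreducible (from flatness of $\sch \to B$, integrality of $B$, and the geometric irreducibility of the fibers of $\sch \to B$), so the generic fiber of $\sch \to \bp^n$ is geometrically irreducible, and hence so is every fiber. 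Given any connected finite étale cover $V \to U$, the fiber product $V \times_U W$ projects to the geometrically irreducible $V$ with geometrically irreducible fibers (the same as those of $W \to U$), and is therefore geometrically irreducible. Its generic fiber over $B$, namely $V \times_U (\sch_{\bar\eta} \cap U_{\overline{K(\eta)}})$, is then irreducible, establishing the desired surjectivity $\pi_1(\sch_{\bar\eta} \cap U) \to \pi_1(U)$.

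To propagate from $\bar\eta$ to a uniform dense open $B^0 \subset B$, I would apply the specialization map for étale fundamental groups: after shrinking $B$ to a smooth open over which $W \to B$ is smooth with geometrically connected fibers (via generic smoothness in characteristic zero), the specialization map $\pi_1(\sch_{\bar\eta} \cap U) \to \pi_1(\sch_b \cap U)$ is surjective for $b$ in this open. Compatibility with the inclusion $\sch_b \cap U \hookrightarrow U$ into the fixed $U$ then transfers the generic surjectivity to each such $b$, yielding the conclusion. The main obstacle is executing the specialization step, since $W \to B$ is not proper ($U$ being only quasi-projective): I would address this by compactifying to $\bar W := \sch \cap (B \times \bar U)$ where $\bar U$ is the projective closure of $U$ in $\bp^n$, applying the classical surjectivity of specialization for the proper family $\bar W \to B$ (valid in characteristic zero), and transferring back to $W \to B$ via the $\pi_1$-surjection $\pi_1(W_b) \to \pi_1(\bar W_b)$ induced by the open immersion $W_b \hookrightarrow \bar W_b$.
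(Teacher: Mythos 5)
Your high‑level plan (establish surjectivity for each fixed cover over a general $b$, then propagate this to a \emph{single} dense open that works for all covers simultaneously) matches the paper's strategy, but several of the individual steps as written do not hold, and the hardest step is essentially unaddressed.

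\textbf{The total‑space‑to‑fiber implication is false.} Twice you argue that because a total space is geometrically irreducible, so is the (geometric) generic fiber of a map out of it: once for $\sch \to \bp^n$ and once for $V\times_U W \to B$. This implication is false in general: take $X = \ba^1_{\bc}=\operatorname{Spec}\bc[y]$ mapping to $S=\ba^1_{\bc}=\operatorname{Spec}\bc[x]$ by $x=y^2$; here $X$ is geometrically irreducible but $X_{\bar\eta}$ has two points. For $\sch\to\bp^n$ the \emph{conclusion} happens to be true, but for a reason you did not give: writing $\bp^n=\pgl_{n+1}/P$, the pullback $\sch\times_{\bp^n}\pgl_{n+1}$ is simultaneously a (Zariski‑locally trivial) $P$‑torsor over $\sch$ and a trivial $\pgl_{n+1}$‑torsor over $\sch_{p_0}$, and both groups are connected; one then passes from all $\bar k$‑fibers to the generic fiber by constructibility of the geometrically‑irreducible locus plus Jacobson‑ness. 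For $V\times_U W\to B$, on the other hand, there is no group action on the source and I see no way to salvage the implication. This is precisely where the paper instead proves a Bertini‑type statement (\autoref{proposition:general-fixed-plane-translate} and \autoref{corollary:general-pgl-translate}): one must really move the member $\sch_b$ by a general $\pgl_{n+1}$‑translate and intersect, rather than argue from irreducibility of the total space. Geometric irreducibility of the total space over $\bc$ does not control the generic fiber over $B$, so your argument for surjectivity of $\pi_1(\sch_{\bar\eta}\cap U)\to\pi_1(U)$ is incomplete.

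\textbf{The propagation step is the heart of the problem and is not correct as sketched.} Even granting surjectivity at $\bar\eta$ (or, equivalently, for a dense open depending on the cover $V$), you must produce \emph{one} dense open $B^0$ that works for \emph{every} $V$. Your plan is to use a $\pi_1$‑specialization map. But the classical specialization map exists for proper families, and $W\to B$ is not proper because $U$ is not. The fix you propose --- compactify to $\bar W\to B$, specialize there, and ``transfer back'' using the surjection $\pi_1(W_b)\twoheadrightarrow\pi_1(\bar W_b)$ --- goes the wrong way. If specialization gave you that $\pi_1(\bar W_b)\to\pi_1(\bar U)$ is surjective, this tells you nothing about $\pi_1(W_b)\to\pi_1(U)$, since $\pi_1(W_b)\twoheadrightarrow\pi_1(\bar W_b)$ and $\pi_1(U)\twoheadrightarrow\pi_1(\bar U)$ both point away from what you want. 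This is exactly the difficulty that forces the paper into the topological argument in \autoref{subsection:completion-of-proof}: one normalizes and compactifies a suitable dense open of the family, uses Ehresmann's theorem to get $C^\infty$ local triviality of both the compactified family and the boundary divisor, and deduces that the image of $\pi_1(\widetilde\scc_b)\to\pi_1(U)$ in any fixed finite quotient is locally constant in $b$. An algebraic substitute would need an open‑fiber specialization theorem (e.g.\ for a smooth family with relative normal crossings boundary) together with a compatibility of the specialization map with the inclusion of the fiber into the fixed scheme $U$, neither of which you establish.

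In short: the reduction to $B$ integral and the identification $W=\sch\times_{\bp^n}U$ are fine, and the torsor observation is a nice route to the irreducibility of the fibers of $\sch\to\bp^n$, but the two ``total space irreducible $\Rightarrow$ generic fiber irreducible'' steps are gaps (one fixable, one not as written), and the specialization argument --- the genuinely hard part, corresponding to \autoref{proposition:case-over-c} --- is not carried out and, as sketched, does not work.
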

	We complete the proof of this theorem in \autoref{subsubsection:proof-completion}.
	As an important special case of \autoref{theorem:hilbert-scheme-galois-image}, take
	$X \subset \bp^n$
	any geometrically irreducible projective scheme of dimension at least $n + 1 - \dim U$, 
	and $Y$ a general $\pgl_{n+1}(k)$-translate of $X$. In this case, the theorem then states that the map $\pi_1(Y \cap U) \ra \pi_1(U)$ is surjective.

	Note that changing the basepoint of $U$
	will not affect surjectivity of the map on fundamental groups, so we omit the basepoint
	from our notation.

\begin{remark}
	\label{remark:projective-version}
	A version of \autoref{theorem:hilbert-scheme-galois-image} holds in positive characteristic
	when $U$ is projective (as opposed to quasi-projective).
	More precisely, let $k$ be any field (of any characteristic) and let
	$U \subset \bp^n$ be any geometrically 
	connected normal projective scheme. Let $B$ be a $k$-scheme which is locally of finite type with a $\pgl_{n+1}(k)$ action,
	let $\sch \subset B \times \bp^n$ be a closed subscheme, 
	and let $\sch \ra B$ be a morphism which is $\pgl_{n+1}(k)$-invariant
	and has geometrically irreducible fibers of
	dimension at least
	$n+1 - \dim U$.
	Then, for every $b \in B$, $\pi_1(\sch_b \cap U) \ra \pi_1(U)$ is surjective.

	We first prove this in the case $k$ is algebraically closed:
	We only need verify that for any connected
	finite \'etale cover $W \ra U$ and every $b \in B$, the pullback $W \times_U (\sch_b \cap U)$ is connected.
	Observe that $W \times_U (\sch_b \cap U) \simeq W \times_{\bp^n} \sch_b$, and the latter is connected by	
	\cite[Corollary 7.3]{jouanolou1982theoremes}.
	We can deduce the case for general $k$ from the case that $k$ is algebraically closed
	from \cite[Expos\'e IX, Th\'eor\`eme 6.1]{noopsortSGA1Grothendieck1971}
	(the proof is essentially the same as that of \autoref{lemma:reduction-to-algebraically-closed-fields}).
	
	Thus, the main work in the proof of \autoref{theorem:hilbert-scheme-galois-image} comes in dealing with the quasi-projective assumption on $U$.
\end{remark}
\begin{remark}
	Suppose that the projective closure $\overline U$ of $U$
	is smooth and the maps $\sch \times_{\bp^n} U \ra B$ and $\sch \times_{\bp^n} U \ra U$ are smooth. 
	In this special case, \autoref{theorem:hilbert-scheme-galois-image}
	follows from \cite[Theorem 5]{kollar2015lefschetz},
	combined with the observation from \autoref{remark:projective-version},
	that \autoref{theorem:hilbert-scheme-galois-image} holds when applied to the projective scheme $\overline U$ in place of $U$.
	Note that in this case, the hypotheses $(2)$ and $(3)$ of \cite[Theorem 5]{kollar2015lefschetz}
	follow from the assumption that $\sch \ra B$ is $\pgl_{n+1}(k)$-invariant.
	In particular, 
	\cite[Theorem 5]{kollar2015lefschetz} applies in the interesting special case
	that $U \subset \bp^n$ is a dense open, $B$ is smooth, and the map $\sch \ra B$ is smooth.
Also, see \cite{kollar2000fundamental} and \cite{kollar2003rationally}
for related results.
\end{remark}
\begin{example}[Failure in characteristic $p$]
	\label{example:failure-in-char-p}
	We note that \autoref{theorem:hilbert-scheme-galois-image}
	does not hold over a field $k$ of positive characteristic,
	so the characteristic $0$ assumption is necessary.
	A counterexample is provided by the case that
	$U = \ba^2 \subset \bp^2$
	and
	$\sch \ra B$ is taken to be the universal family over the Grassmannian of lines in $\bp^2$.
	To show \autoref{theorem:hilbert-scheme-galois-image} does not hold in this case,
	we will show there is no closed point $b \in B$
	for which the map $\pi_1(\sch_b \cap \ba^2) \ra \pi_1(\ba^2)$
	is surjective.
	Since $\sch_b$ is a line in $\bp^2$, the intersection 
	$\sch_b \cap \ba^2$
	is either $\ba^1 \subset \ba^2$ embedded linearly, or empty.
	So, we only need show that the map
	$\pi_1(\ba^1) \ra \pi_1(\ba^2)$ is not surjective.
	To show this, we will produce a connected finite \'etale cover
	$W \ra \ba^2$ so that $W \times_{\ba^2} \ba^1$ is disconnected.
	If we choose coordinates so that the map $\ba^1 \ra \ba^2$ is given
	by $\spec k[x,y]/(y) \ra \spec k[x,y]$ then the Artin-Schreier
	cover $W := \spec k[x,y,t]/(t^p-t-y) \ra \spec k[x,y]$ does
	the trick.
\end{example}
\begin{remark}
	\autoref{example:failure-in-char-p} shows when $\ch k > 0$, 
	there may be no closed points
	$b \in B$ so that the map $\pi_1(X \times_{\bp^n} H) \ra \pi_1(X)$
	is surjective. 
	However, it is always true that
	for a generic point $\eta \in B$, the map
$\pi_1(X \times_{\bp^n} \sch_\eta) \ra \pi_1(X)$
	is surjective.
	This follows in the case that $k$ is algebraically closed from \autoref{corollary:general-pgl-translate} proven below,
	and can be deduced for arbitrary fields $k$ 
	from \cite[Expos\'e IX, Th\'eor\`eme 6.1]{noopsortSGA1Grothendieck1971}
	(the proof is essentially the same as that of \autoref{lemma:reduction-to-algebraically-closed-fields}).
	See also the related discussion in the case that the fibers of $\sch \ra B$
	are hyperplanes in the answers to the mathoverflow post
	\cite{MO:lefschetz-bertini}.
\end{remark}
The proof of \autoref{theorem:hilbert-scheme-galois-image} makes heavy use of the fact that we are working with the \'etale fundamental group
and not the topological fundamental group. This naturally leads to the following question.
\begin{question}
	\label{question:}
	Does \autoref{theorem:hilbert-scheme-galois-image} continue to hold over the complex numbers
	if the \'etale fundamental group is replaced by the topological fundamental group?
\end{question}

\subsection{Application to Galois Representations}
Given a scheme $U$ over a field of characteristic $0$ and an abelian scheme $f:A \to U$, let $A[n]$ denote the relative $n$-torsion of $f$. There is a Galois representation on the relative $n$-torsion $\rho_{n,A}: \pi_1(U) \to \aut(A[n]) \subset \GSp_{2g}(\bz/n\bz)$. Note that $\aut(A[n]) \subset \GSp_{2g}(\bz/n\bz)$ as opposed to only $\gl_{2g}(\bz/n\bz)$ because the action respects the symplectic form
on $A[n]$ given by the Weil pairing.
These $\mod n$ representations form a compatible system, ordered by divisibility, and taking the limit, we obtain a Galois representation
$\rho_{A}: \pi_1(U) \to \GSp_{2g}(\widehat{\bz})$
\autoref{theorem:hilbert-scheme-galois-image} has the following straightforward application to the study of Galois representations.

\begin{corollary}
	\label{corollary:galois-representation-surjectivity}
	Let $k$ be a field of characteristic $0$.
	Let $f\colon A \ra U$ be an abelian scheme of relative dimension $g$
	over $U$ with $U \subset \bp^n_k$ geometrically connected, normal, and quasi-projective. 
	Let $B$ be a $k$-scheme locally of finite type with a $\pgl_{n+1}(k)$
action and let $\sch \subset B \times \bp^n$ be a closed subscheme
	so that $\sch \ra B$ is a
	$\pgl_{n+1}(k)$-invariant morphism with geometrically irreducible fibers of dimension at least $n+1 - \dim U$.
	Let $\rho_A \colon \pi_1(U) \ra \GSp_{2g}(\widehat{\bz})$ denote the corresponding Galois representation.
	Then,
	for a general $b \in B$, $\im \rho_A = \im \rho_{f^{-1}(\sch_b \cap U)}$.
\end{corollary}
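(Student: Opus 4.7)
The plan is to deduce this as a direct consequence of Theorem \ref{theorem:hilbert-scheme-galois-image} using only functoriality of the Galois representation on relative torsion. Write $V := \sch_b \cap U$, let $\iota \colon V \hookrightarrow U$ be the inclusion, and let $A_V := f^{-1}(V) \to V$ be the pullback abelian scheme.

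First, since $A_V[n] \simeq \iota^* A[n]$ as lisse étale sheaves for every positive integer $n$, functoriality of monodromy gives $\rho_{A_V, n} = \rho_{A, n} \circ \iota_*$, where $\iota_* \colon \pi_1(V) \to \pi_1(U)$ is the induced map on étale fundamental groups. Passing to the inverse limit over $n$ yields $\rho_{A_V} = \rho_A \circ \iota_*$, and in particular $\im \rho_{A_V} = \rho_A(\im \iota_*)$.

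Second, the hypotheses placed on $(U, \sch, B)$ in the corollary match those of Theorem \ref{theorem:hilbert-scheme-galois-image} exactly. Applying that theorem produces a dense open $B^0 \subset B$ on which $\iota_*$ is surjective. For $b \in B^0$ it then follows that $\im \rho_{A_V} = \rho_A(\pi_1(U)) = \im \rho_A$, which is the required equality. Since Theorem \ref{theorem:hilbert-scheme-galois-image} already absorbs all of the geometric content (connectedness and non-emptiness of $V$ are subsumed in its statement, since otherwise the map of fundamental groups would not be meaningful), there is no genuine obstacle to this deduction; the corollary is essentially just a repackaging of the theorem via the standard pullback formalism for lisse sheaves.
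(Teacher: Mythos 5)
Your proof is correct and is essentially identical to the paper's: both apply Theorem \ref{theorem:hilbert-scheme-galois-image} to get surjectivity of $\iota_*\colon \pi_1(\sch_b \cap U) \ra \pi_1(U)$ for general $b$, then use the functoriality identity $\rho_{f^{-1}(\sch_b \cap U)} = \rho_A \circ \iota_*$ to conclude. The only difference is that you spell out the lisse-sheaf justification of this identity, which the paper takes as immediate.
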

\begin{proof}
	By \autoref{theorem:hilbert-scheme-galois-image}, we know that for a general $b \in B$, $\iota_b\colon \pi_1(U \cap \sch_b) \ra \pi_1(U)$ is surjective.
	Since $\rho_A \circ \iota_b= \rho_{f^{-1}(\sch_b \cap U)}$, the result follows.
\end{proof}

\begin{remark}
	\autoref{corollary:galois-representation-surjectivity} offers a function field version of~\cite[Theorem 1.1]{landesman-swaminathan-tao-xu:rational-families}.
That is, ~\cite[Theorem 1.1]{landesman-swaminathan-tao-xu:rational-families} states that for an abelian scheme $f \colon A \ra U$ over a number field $k \neq \bq$
with $U$ rational, the Galois image of $A$ agrees with that of most $K$-points of $A$, counted by height.
Here, when we replace the number field by the function field of a curve, we obtain that the Galois image of $A$ agrees with that of
$f^{-1}(U \cap \sch_b)$, for $b \in B$ general. 
Note that in this function field case, unlike in the number field case, we make no hypotheses on whether $U$ is rational.
Further we obtain that there is a {\it Zariski}-open locus where the Galois images of the specializations agree with that of $A$, 
while in the number field case, the corresponding locus is typically not
Zariski-open, as is discussed in \cite[Remark 1.3]{landesman-swaminathan-tao-xu:rational-families}.
\end{remark}

\subsubsection{}
Further, \autoref{corollary:galois-representation-surjectivity} offers potential applications to studying images of Galois representations
under specialization.
For example, by \cite[Theorem 1.1]{cadoretuniform-ii}, any curve has a Zariski open locus of $k$-points (for $k$ a number field)
on which the index of the monodromy of the $k$-points in the monodromy of the family is finite.
However, it is an open question as to whether a higher dimensional base has a Zariski-open locus of $k$-points on which the monodromy groups have finite index
in $\rho_A(\pi_1(U))$.
Since \autoref{corollary:galois-representation-surjectivity} relates the monodromy of $A$ to the monodromy of curves on $A$,
in which case it is known there are only finitely many rational points with infinite index,
\autoref{corollary:galois-representation-surjectivity} may help in understanding the locus where the monodromy has infinite index.

\subsection{Outline of proof}

We now outline the proof of \autoref{theorem:hilbert-scheme-galois-image}, which will occupy the remainder of the paper.

Using standard techniques, we reduce to the case that $k = \bc$ in \autoref{subsection:reduction}.
The key issue is to prove \autoref{theorem:hilbert-scheme-galois-image} in the case $k = \bc$ and that $\sch_b \cap U$ is a curve. 
To check surjectivity, we verify that for every connected finite \'etale cover $W \ra U$ and a general $b \in B$, the pullback $W \times_U \sch_b$ is connected.
In \autoref{subsection:finite-quotients}, we show that for a {\it single} cover $W \ra U$, the pullback to a general member $\sch_b$ is connected.
To prove this we show that the pullback of $\sch_b \times_{\bp^n} W \ra \bp^n \times \bp^n$ along the diagonal $\Delta: \bp^n \ra \bp^n \times \bp^n$
is connected.
This connectivity is verified using several variants of Bertini's theorem:
One variant allows us replace the setup in $\bp^n$ with an analogous one in $\ba^n$.
Another variant verifies the corresponding statement in $\ba^n$.

However, the above argument only shows that for a fixed connected finite \'etale cover, the pullback of a general member is connected. 
To prove the theorem, we need to show
that for {\em every} connected finite \'etale cover, the pullback of a general member is connected. 
We show this in \autoref{subsection:completion-of-proof}.
The key input is Ehresmann's fibration theorem which  
shows that there is a dense open subscheme of our family which is locally trivial over $\bc$ in the $C^\infty$ topology.
By deforming one member of this dense open to all others, we deduce that if one member of this dense open surjects onto a given
finite quotient of $\pi_1(U)$ then all members of this dense open surject onto that finite quotient.

\section{Proof of main result}
\label{section:proof}
\subsection{Surjections onto finite quotients}
\label{subsection:finite-quotients}

We now prove the main technical tool of the paper, \autoref{proposition:general-fixed-plane-translate}.
From this, we deduce
\autoref{corollary:general-pgl-translate}, which, in the notation of \autoref{theorem:hilbert-scheme-galois-image}, implies
that for a fixed geometrically connected finite \'etale cover of $U$, the pullback to $U \cap \sch_b$ for $b \in B$ general is geometrically irreducible.
\autoref{corollary:general-pgl-translate} will be used in proving
\autoref{theorem:hilbert-scheme-galois-image} over $\bc$ in \autoref{proposition:case-over-c}.
\begin{proposition}
	\label{proposition:general-fixed-plane-translate}
	Let $k$ be an arbitrary field and let
	$U$ be a normal geometrically connected $k$-scheme with an embedding $U \subset \bp^n$ and with closure $\overline U \subset \bp^n$.
	Let $W \ra U$ be a geometrically connected finite \'etale cover.
	Let $Y$ 
	denote any geometrically irreducible closed subscheme of $\bp^n$ with $\dim Y = n + 1 - \dim U$.
	Let $H$ denote a hyperplane in $\bp^n$
	intersecting $Y \cap U$ and $Y \cap \overline U$ in a finite, nonempty set of points
	and let $G_H \subset \pgl_{n+1}(k)$ denote the subgroup of
	$\pgl_{n+1}(k)$ fixing $H$. 
	Then, for a general $\sigma \in G_H$,
	$\sigma(Y) \times_{\bp^n} W$
	is geometrically irreducible.
	\end{proposition}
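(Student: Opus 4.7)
The plan is to reduce the claim to an affine Bertini-type statement by passing to the complement of $H$. I would set $\ba^n := \bp^n \setminus H$ and form $Y' := Y \cap \ba^n$, $U' := U \cap \ba^n$, $W' := W \times_U U'$, with structure map $f \colon W' \to U'$. The hypothesis that $H$ intersects $Y \cap \overline{U}$ in a finite nonempty set ensures that $Y' \cap U'$ is dense in the one-dimensional variety $Y \cap U$, so checking that $\sigma(Y) \times_{\bp^n} W$ is geometrically irreducible for a general $\sigma \in G_H$ reduces, modulo the finite set of boundary points lying in $H$, to the analogous affine statement for $\sigma(Y') \times_{\ba^n} W'$.

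Next, I exploit the group structure. Write $G_H \cong T \rtimes \gl_n$, where $T \cong \ba^n$ is the translation subgroup and $G_H$ acts on $\ba^n$ by $(\tau, M) \cdot x = M(x) + \tau$. Under the change of variables $((\tau, M), y) \mapsto ((\tau + M(y), M), y)$, the action map $\Psi \colon G_H \times Y' \to \ba^n$, $(\sigma, y) \mapsto \sigma(y)$, becomes the projection $\ba^n \times \gl_n \times Y' \to \ba^n$. Pulling back the étale cover $W' \to U'$ along $\Psi$ therefore yields the incidence variety
$$\Phi^\circ := \{(\sigma, y, w) \in G_H \times Y' \times W' : \sigma(y) = f(w)\} \cong W' \times \gl_n \times Y',$$
which is visibly geometrically irreducible. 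Its projection to $G_H$ has fiber over $\sigma$ equal to $\sigma(Y') \times_{\ba^n} W'$, reducing the problem to geometric irreducibility of this generic fiber. Factoring out the $\gl_n$-component, the remaining content is the affine statement that the morphism $\mu \colon W' \times Y' \to \ba^n$, $(w, y) \mapsto f(w) - y$, has geometrically irreducible generic fiber $f^{-1}(\tau + Y')$ for generic $\tau \in \ba^n$.

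This is where the two Bertini variants alluded to in the outline enter. One variant transfers the setup from $\bp^n$ to $\ba^n$: a Fulton-Hansen-type connectedness theorem applies in the projective closures (thanks to $\dim W + \dim Y \geq n + 1$), and the finiteness of $H \cap Y \cap \overline{U}$ allows us to pass to the affine complement without losing irreducibility, since only isolated boundary points are removed. The second variant is the affine irreducibility statement itself, confirming that $f^{-1}(\tau + Y')$ is geometrically irreducible for general $\tau$. The main obstacle is this second step: even though the total space $W' \times Y'$ is obviously irreducible, a priori the generic fiber of $\mu$ could decompose into several geometric components, corresponding to a nontrivial algebraic extension of $k(\ba^n)$ inside $k(W' \times Y')$. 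Ruling this out will require careful interplay between the compactified setup, where Fulton-Hansen applies, and the affine setup where the translation action is transitive, together with the hypothesis controlling the behavior of $Y$ near $H$ so that removing the finite boundary does not disconnect the generic fiber.
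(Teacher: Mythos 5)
The broad outline you give matches the paper's strategy (reduce to an affine statement over $\ba^n := \bp^n - H$, use Jouanolou/Fulton--Hansen connectedness in the proper setting, and then invoke a Bertini-type irreducibility statement), and the incidence-variety reformulation $\Phi^\circ \cong W' \times \gl_n \times Y'$ is a correct and natural bookkeeping device. But the proposal stops precisely where the real argument has to begin. You observe that irreducibility of the total space $\Phi^\circ$ does not give irreducibility of the generic fiber of $\Phi^\circ \to G_H$, and you correctly identify this as ``the main obstacle,'' but you do not resolve it; the closing sentence is an acknowledgment of the gap rather than a proof. The paper closes this gap with a concrete idea you do not supply: for a general $\sigma \in G_H$ the intersection $\sigma(X) \times_{\ba^n} Z$ is literally the pullback of $X \times Z \to \ba^n \times \ba^n$ along a \emph{general $n$-plane} $S \hookrightarrow \ba^{2n}$, because a general affine $n$-plane in $\ba^{2n}$ in the normal form $x \mapsto (f_1(x),\dots,f_n(x),x_1,\dots,x_n)$ factors as $(\sigma,\id)\circ\Delta$ with $\sigma$ the affine map $(f_1,\dots,f_n)$. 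Once the family is recognized as linear sections of a fixed morphism to $\ba^{2n}$, Jouanolou's Bertini irreducibility theorem \cite[Corollary 6.7(3)]{jouanolou1982theoremes} applies directly, since $\dim X + \dim\psi(Z) = n+1$ makes the image of $X\times Z$ have dimension one more than the codimension of $S$.

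There is a secondary issue with the ``factoring out the $\gl_n$-component'' step. Replacing the generic fiber over $G_H \cong \ba^n \rtimes \gl_n$ by the generic fiber of $(w,y)\mapsto f(w)-y$ over $\ba^n$ amounts to restricting attention to the locus $M=\id$ inside $G_H$. Constructibility of the geometrically irreducible locus does \emph{not} let you pass from density along a proper closed subscheme to density in $G_H$, so this reduction is not valid as stated; in fact the paper works with the full affine group $G_H$ precisely because one needs the linear coefficients of the general $n$-plane to be generic, not just the translation part. Finally, a small technical point: you work with $W'\to U'$, but Jouanolou's connectedness theorem requires a proper morphism, which is why the paper first replaces $W$ by the normalization $\overline W$ of $\overline U$ in $K(W)$ and only then compares with the affine picture.
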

\begin{proof}
Let $\overline W$ denote the normalization of
$\overline U$ in $K(W)$.
We now reduce to showing that $\sigma(Y) \times_{\bp^n} \overline W$ is geometrically irreducible for general $\sigma \in G_H$.
Further, observe that $W = \overline W \times_{\overline U} U$, since normalization respects base change. 
In particular, $W \subset \overline W$ a dense open subscheme.
Since $Y \cap U \cap H$ is nonempty, for a general $\sigma\in G_H$, we have that $\sigma(Y) \times_{\bp^n} W \subset \sigma(Y) \times_{\bp^n} \overline W$
is open and nonempty.
Thus, in order to show $\sigma(Y) \times_{\bp^n} W$ is geometrically irreducible, it suffices to show $\sigma(Y) \times_{\bp^n} \overline W$
is geometrically irreducible.

We next reduce to showing that $(\sigma(Y) \times_{\bp^n} \overline W) \times_{\bp^n} (\bp^n - H)$
is geometrically irreducible
for general $\sigma \in G_H$.
Since $\dim (\sigma(Y) \times_{\bp^n} \overline W) \times_{\bp^n} H = 0$ for a general
$\sigma \in G_H$ by assumption,
and
$\dim \sigma(Y) \times_{\bp^n} \overline W \geq 1$ for all $\sigma \in G_H$,
it follows that $\dim \sigma(Y) \times_{\bp^n} \overline W =1$ for a general $\sigma \in G_H$.
Therefore, $\sigma(Y) \times_{\bp^n} \overline W$
is $1$-dimensional for a general $\sigma \in G_H$.
Since $\overline W \ra \overline U$ is finite,
hence proper,
it follows from
\cite[Corollary 7.3]{jouanolou1982theoremes}
that $\sigma(Y) \times_{\bp^n} \overline W$ is geometrically connected.
Note that any irreducible component of a $1$-dimensional connected scheme
must be $1$-dimensional.
Since for a general $\sigma \in G_H$,
$(\sigma(Y) \times_{\bp^n} \overline W) \cap H$
is a finite collection of points,
if 
$(\sigma(Y) \times_{\bp^n} \overline W) \cap (\bp^n - H)$
is geometrically irreducible, 
$\sigma(Y) \times_{\bp^n} \overline W$ must also be geometrically irreducible.

To complete the proof, it suffices to show that for a general $\sigma \in G_H$, $(\sigma(Y) \times_{\bp^n} \overline W) \cap (\bp^n - H)$
is geometrically irreducible.
This holds by the following \autoref{lemma:affine-space-intersection-irreducibility}
by taking $X$ to be $Y \times_{\bp^n} (\bp^n - H)$ and $Z$ to be $\overline W \times_{\bp^n} (\bp^n - H)$.
\end{proof}
To complete the proof of \autoref{proposition:general-fixed-plane-translate},
it suffices to prove the following lemma.
\begin{lemma}
	\label{lemma:affine-space-intersection-irreducibility}
	Choose a hyperplane $H \subset \bp^n$ and let $\ba^n := \bp^n - H$.
	Let $G_H$ denote the automorphisms of $\bp^n$ fixing $H$,
	which then acts on $\ba^n = \bp^n - H$.
	Let $X \subset \ba^n$ be a closed subscheme 
	and let $\psi \colon  Z \ra \ba^n$ be a morphism
	so that $X$ and $Z$ are geometrically irreducible
	and $\dim X + \dim \psi(Z) = n + 1$.
	Then, for a general $\sigma \in G_H$,
	$\sigma(X) \times_{\ba^n} Z$ is geometrically irreducible.
\end{lemma}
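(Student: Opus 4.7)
The plan is to pass to the incidence variety
\[
\sci := \{(\sigma, x, z) \in G_H \times X \times Z : \sigma(x) = \psi(z)\}.
\]
The projection $q \colon \sci \to G_H$ has fiber over $\sigma$ canonically isomorphic to $\sigma(X) \times_{\ba^n} Z$, so it suffices to show that the generic fiber of $q$ is geometrically irreducible.

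First, I would prove that $\sci$ itself is geometrically irreducible by analyzing the other projection $p \colon \sci \to X \times Z$. The fiber of $p$ over a point $(x,z)$ is the coset $\{\sigma \in G_H : \sigma(x) = \psi(z)\}$, which is a coset of the point-stabilizer $G_H^x \subset G_H$ of $x \in \ba^n$. Since $G_H \cong \gl_n \ltimes \ba^n$ is the affine group, acting transitively on $\ba^n$ with every stabilizer conjugate to $\gl_n$, each such coset is nonempty, geometrically irreducible, and of constant dimension $\dim G_H - n$. Consequently $p$ is a surjective morphism with geometrically irreducible equidimensional fibers onto the geometrically irreducible base $X \times Z$, from which one concludes that $\sci$ is geometrically irreducible of dimension $\dim X + \dim Z + \dim G_H - n$.

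Second, a dimension count shows that $q$ is dominant: its generic fiber has dimension $\dim X + \dim Z - n \geq \dim X + \dim \psi(Z) - n = 1$, using the hypothesis $\dim X + \dim \psi(Z) = n+1$ together with $\dim \psi(Z) \leq \dim Z$. Combining the two steps, $q$ is a dominant morphism from a geometrically irreducible scheme to a geometrically irreducible scheme, so its generic fiber is irreducible over $k(G_H)$; spreading out then yields a dense open $V \subset G_H$ over which every fiber $\sigma(X) \times_{\ba^n} Z$ is geometrically irreducible, which is the desired conclusion.

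The main obstacle I anticipate is the final passage from irreducibility of the generic fiber of $q$ to \emph{geometric} irreducibility, which does not follow formally from irreducibility of $\sci$ alone. One route is to study the Stein factorization of (a compactification of) $q$ and use that $G_H$ is a smooth irreducible group together with the abundance of sections produced by the transitive $G_H$-action on $\ba^n$ to rule out any finite, non-birational factor. Alternatively, one can invoke a Kleiman--Bertini type theorem for transitive group actions applied to $X \subset \ba^n$ and $\psi \colon Z \to \ba^n$, which directly yields geometric irreducibility of $\sigma(X) \times_{\ba^n} Z$ for general $\sigma$ in the numerical regime $\dim X + \dim \psi(Z) > n$.
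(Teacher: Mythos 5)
Your approach via the incidence variety $\sci$ is a genuinely different route from the paper's, which instead realizes $\sigma(X) \times_{\ba^n} Z$ as the pullback of $X \times Z \to \ba^n \times \ba^n$ along a general affine $n$-plane $S$ of the form $(f_1(x), \dots, f_n(x), x_1, \dots, x_n)$, identifies such an $S$ with the composite $(\sigma, \id) \circ \Delta$ for a suitable $\sigma \in G_H$, and then applies Jouanolou's Bertini irreducibility theorem \cite[Corollary 6.7(3)]{jouanolou1982theoremes} directly. Your first step is essentially sound: writing $\sci = (G_H \times X) \times_{\ba^n} Z$ via the universal translate map $\mu \colon G_H \times X \to \ba^n$, one checks that $\mu$ is flat and surjective with geometrically irreducible fibers (by $G_H$-equivariance and transitivity), whence $\sci \to Z$ inherits these properties and $\sci$ is geometrically irreducible.

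The gap you flag at the end is, however, real and unfilled, and it is the crux of the lemma. Irreducibility of $\sci$ together with dominance of $q \colon \sci \to G_H$ gives only that the generic fiber $\sci_\eta$ is irreducible over the function field $k(G_H)$; it does not give geometric irreducibility, and without that one cannot conclude that general \emph{closed} fibers $\sigma(X) \times_{\ba^n} Z$ are irreducible. A standard cautionary example: $\spec k[y,z] \to \spec k[t]$, $t \mapsto y^2$, has irreducible source, is dominant, and has irreducible generic fiber $\spec k(t)(\sqrt{t})[z]$, yet the closed fiber over every $t \neq 0$ is two disjoint affine lines. Your two proposed repairs are not carried out: the Stein factorization route would require exhibiting enough rational sections of $q$ or a concrete equivariance argument to kill any nontrivial finite factor, and neither is supplied; while a ``Kleiman--Bertini type theorem for transitive group actions'' in the irreducibility regime $\dim X + \dim \psi(Z) > n$ is, in effect, the assertion of the lemma itself, so citing it is circular. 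It is precisely to supply the needed geometric irreducibility that the paper routes the problem through Jouanolou's Bertini theorem, which is stated for general \emph{linear} sections and already yields geometrically irreducible pullbacks.
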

\begin{proof}
Choose a general $n$-dimensional affine subspace $S \simeq \ba^n$ with an inclusion
\begin{align*}
	\iota  \colon  S & \rightarrow \ba^n \times \ba^n \\
	\left( x_1, \ldots, x_n \right) & \mapsto \left( f_1(x_1, \ldots, x_n), \ldots, f_{2n}(x_1, \ldots, x_n) \right)
\end{align*}
where $f_i$ are polynomials of the form $k_0 + k_1 x_1 + \cdots k_n x_n$.
We may further assume that $f_{n+i}(x_1, \ldots, x_n) = x_i$ for $1 \leq i \leq n$,
since a general $n$-plane can be written in this form.
For such a general such $n$-plane,
we can find $\sigma \in G_H$ so that $\iota$ factors as
\begin{equation}
	\label{equation:hyperplane-factorization}
	\begin{tikzcd}
		S \ar {rr}{\Delta} \ar {rd}{\iota} && S \times S \ar {ld}{\left( \sigma, \id \right)} \\
		 & \ba^n \times \ba^n. & 
	 \end{tikzcd}\end{equation}

Define $\Gamma$ to be the fiber product
\begin{equation}
	\nonumber
	\begin{tikzcd} 
		\Gamma \ar {r} \ar {d} & S \ar {d}{\iota} \\
		X \times Z \ar {r} & \ba^n \times \ba^n.
	\end{tikzcd}\end{equation}
Using the identification of $\iota$ with $(\sigma, \id)$ from
\eqref{equation:hyperplane-factorization},
the square
\begin{equation}
	\nonumber
	\begin{tikzcd} 
		\Gamma \ar{r} \ar{d}& S  \ar {d}{\Delta}  \\
		X \times Z \ar{r}{(\sigma, \id)} &	\ba^n \times \ba^n
	\end{tikzcd}\end{equation}
is also Cartesian.
This implies that
$\Gamma = \sigma(X) \times_{\ba^n} Z = \sigma(X) \cap Z.$
Summarizing, for a general $\sigma \in G_H$,
we obtain $\sigma(X) \times_{\ba^n} Z$
as the pullback of $X \times Z \rightarrow \ba^n \times \ba^n$
along a general $n$-plane $S \xrightarrow \iota \ba^n \times \ba^n$.
So, by Bertini irreducibility
\cite[Corollary 6.7(3)]{jouanolou1982theoremes}, we obtain that $\sigma(X) \times_{\ba^n} Z$ is geometrically irreducible for
a general such $\sigma$, as desired.
\end{proof}

We now use constructibility of the geometrically irreducible locus of a morphism to bootstrap the result of \autoref{proposition:general-fixed-plane-translate}
to apply to arbitrary $\pgl_{n+1}(k)$-invariant families.
\begin{corollary}
	\label{corollary:general-pgl-translate}
	Let $k$ be an arbitrary algebraically closed field and let
	$U \ra \spec k$ be a normal connected scheme with an embedding $U \subset \bp^n$. 
	Let $W \ra U$ be a connected finite \'etale cover.
Let $B$ be a $k$-scheme, locally of finite type with a $\pgl_{n+1}(k)$ action.
	Let
	$\sch \subset B \times \bp^n$ be a closed subscheme
	so that $\sch \rightarrow B$ has geometrically irreducible fibers of dimension
	$n + 1 - \dim U$
	and is $\pgl_{n+1}(k)$-invariant.
	Then, there is a dense open subscheme $B^0 \subset B$ so that for every $b \in B^0$,
	$\sch_b \times_{\bp^n} W$ is geometrically irreducible.
\end{corollary}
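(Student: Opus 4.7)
My strategy is to combine constructibility of the ``geometrically irreducible fiber'' locus with \autoref{proposition:general-fixed-plane-translate} applied to individual closed-point fibers, together with the $\pgl_{n+1}$-invariance of $\sch$. By a standard result on geometrically irreducible fibers of a morphism of finite type (e.g.\ EGA IV, 9.7.7 applied to $\sch \times_{\bp^n} W \to B$), the locus $B' := \{ b \in B : \sch_b \times_{\bp^n} W \text{ is geometrically irreducible}\}$ is constructible in $B$. Since a constructible subset of $B$ contains a dense open if and only if it is Zariski-dense in $B$, it suffices to show $B'$ is Zariski-dense.

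For any closed point $b_0 \in B(k)$ with $\sch_{b_0} \cap U \neq \emptyset$ (a dense open condition on $b_0$, since $\pgl_{n+1}$ acts transitively on $\bp^n$), the fiber $\sch_{b_0}$ is geometrically irreducible of the required dimension, and I apply \autoref{proposition:general-fixed-plane-translate} with $Y := \sch_{b_0}$ and some hyperplane $H$ meeting $Y \cap U$ and $Y \cap \overline U$ in a finite nonempty set. The proposition then produces a dense open $V \subseteq G_H$ such that for every $\sigma \in V(k)$, $\sigma(\sch_{b_0}) \times_{\bp^n} W$ is geometrically irreducible; by $\pgl_{n+1}(k)$-invariance of $\sch$, $\sigma(\sch_{b_0}) = \sch_{\sigma \cdot b_0}$, so $\sigma \cdot b_0 \in B'$ for all such $\sigma$.

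Since $k$ is algebraically closed, $V(k)$ is Zariski-dense in $V$ and hence in $G_H$, so its image $V(k) \cdot b_0 \subseteq B'$ under the orbit morphism $G_H \to B$, $\sigma \mapsto \sigma \cdot b_0$, is Zariski-dense in the orbit closure $\overline{G_H \cdot b_0} \subseteq B$, which contains $b_0$. Suppose, for contradiction, that $B' \subseteq Z$ for some proper closed $Z \subsetneq B$. Then $V(k) \cdot b_0 \subseteq Z$, so $\overline{G_H \cdot b_0} \subseteq Z$, and in particular $b_0 \in Z$. As $b_0$ was arbitrary in the dense open locus where Proposition \ref{proposition:general-fixed-plane-translate} applies, and the $k$-points of this locus are Zariski-dense in $B$, we conclude $Z = B$, a contradiction. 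Thus $B'$ is Zariski-dense and hence contains a dense open of $B$.

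The main subtlety is in the final paragraph: the argument implicitly uses that the $\pgl_{n+1}(k)$-action on $B$ arises from a $k$-morphism $\pgl_{n+1} \times B \to B$, so that orbit closures $\overline{G_H \cdot b_0}$ are well-defined closed subsets of $B$ and Zariski-density of $V(k)$ in $G_H$ transfers to Zariski-density of $V(k) \cdot b_0$ in the orbit closure containing $b_0$.
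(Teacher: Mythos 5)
Your proof is correct and follows the same skeleton as the paper's: reduce by constructibility of the geometrically irreducible locus (\cite[Th\'eor\`eme 9.7.7(iv)]{EGAIV.3}) to showing the locus $B'$ is dense, then apply \autoref{proposition:general-fixed-plane-translate} at individual fibers and propagate density along $G_H$-orbits. The organization differs in a way worth noting. The paper first reduces to the case $B \simeq \pgl_{n+1}(k)$ (arguing orbit-by-orbit), fixes a single fiber $Y := \sch_e$, and then varies the hyperplane $H$ over a dense open $S \subset \bp V^\vee$; to show $\cup_{[H] \in S} G_H$ is dense in $\pgl_{n+1}(k)$ it uses the fact that every $M \in \pgl_{n+1}(k)$ fixes some hyperplane (the dual-eigenvector argument, needing $k = \overline k$). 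You instead vary the base point $b_0$ over a dense set of closed points, fix a single hyperplane $H = H(b_0)$ for each $b_0$, and conclude density from $b_0 \in \overline{V(k)\cdot b_0} \subset \overline{B'}$. This sidesteps both the reduction to a single orbit and the eigenvector argument, and is arguably a bit cleaner.

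One imprecision to fix: for \autoref{proposition:general-fixed-plane-translate} to apply with $Y := \sch_{b_0}$, you need a hyperplane $H$ meeting $Y \cap U$ and $Y \cap \overline U$ in a finite \emph{nonempty} set; this forces $\dim(Y \cap \overline U) = 1$, not merely $Y \cap U \neq \emptyset$ (if $\dim(Y \cap \overline U) \geq 2$ then any hyperplane through a point of $Y \cap \overline U$ meets it in positive dimension, so no suitable $H$ exists). Also, the locus $\{b_0 : \sch_{b_0} \cap U \neq \emptyset\}$ is constructible but not in general open, since $U$ is only quasi-projective and the image of $\sch \cap (B \times U) \ra B$ need not be open; it is, however, dense (a Bertini or Kleiman-type transversality argument, as in the paper's proof, shows that for any $b$ a general $\pgl_{n+1}$-translate of $\sch_b$ meets $U$ in a nonempty $1$-dimensional set). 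So the correct $\Omega$ is the dense constructible locus $\{b_0 : \sch_{b_0} \cap U \neq \emptyset \text{ and } \dim(\sch_{b_0} \cap \overline U) \leq 1\}$; since $k$ is algebraically closed, its $k$-points are dense in $B$, which is exactly what your contradiction argument needs. With that correction the proof goes through unchanged.
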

\begin{proof}
	By \cite[Th\'eor\`eme 9.7.7(iv)]{EGAIV.3}
	the locus of $B$ over which $\sch \times_{\bp^n} W$ is geometrically irreducible is constructible.
	Thus, to show there is an open locus over which $\sch \times_{\bp^n} W$ is geometrically irreducible, it suffices to show
	that for each $\pgl_{n+1}(k)$ orbit, there is an open locus over which $\sch \times_{\bp^n} W$ is geometrically irreducible.
	Therefore it suffices to prove the result in the case that $B \simeq \pgl_{n+1}(k)$. 
	By Bertini's theorem, for a general $b$, $\sch_b \cap U$ is $1$-dimensional,
	and so we may as well choose the isomorphism $B \simeq \pgl_{n+1}(k)$
	so that $\sch_e \cap U$
	is $1$-dimensional (where $e \in \pgl_{n+1}$ is the identity). 
	
	For $H \subset \bp^n$ a hyperplane, let $G_H \subset \pgl_{n+1}(k)$ denote the subscheme
	fixing $H$. We next show that every closed point of $\pgl_{n+1}(k)$ is contained in $G_H$ for some $H$.
	To see this, let $V$ denote an $(n+1)$-dimensional vector space, and choose an automorphism $M$ of $\bp V$,
	corresponding to a given element in $\pgl_{n+1}(k)$. Letting $V^\vee$ denote the dual vector space,
	$M$ induces a dual automorphism 
	$M^\vee$ on $\bp V^\vee$. Viewing $M^\vee$ as a matrix
	acting on $V^\vee$, since $k$ is algebraically closed, $M^\vee$ has an eigenvalue.
	This eigenvalue corresponds to a hyperplane $H$ fixed by $M$, and so $M \in G_H$.

	Let $\widetilde{B^0} $ denote the set of points such that $\sch_b \times_{\bp^n} W$ is geometrically irreducible.
Let $\overline U \subset \bp^n$ denote the closure of $U$.
	By Bertini's theorem, there is a dense open subset $S \subset \bp V^\vee$ parameterizing those $[H] \in \bp V^\vee$ 
	so that both $H \cap \sch_e \cap \overline U$
	and $H \cap \sch_e \cap U$ 
	are $0$-dimensional and nonempty.
	Since every closed point of $B$ is contained in $G_H$ for some $H$,
	it follows that $\cup_{[H] \in S} G_H$ is dense in $B$. 
	By \autoref{proposition:general-fixed-plane-translate} that $\widetilde{B^0}$ contains the generic point
	of $G_H$ for each $H$ in the dense open $S \subset \bp V^\vee$,
	and so $\widetilde{B^0}$ is also dense in $B$.
	Since $\widetilde{B_0} $ is constructible, it must contain a dense open subset $B^0 \subset B$.
\end{proof}

\subsection{Completion of proof over $\bc$}
\label{subsection:completion-of-proof}

The main result of this section is \autoref{proposition:case-over-c}, proving our main theorem over $k = \bc$.
Before proving this, we record an elementary lemma which will enable us to make a useful reduction at the beginning of
the proof of \autoref{proposition:case-over-c}.

\begin{lemma}
	\label{lemma:reduction-to-curves}
For a fixed field $k$, if \autoref{theorem:hilbert-scheme-galois-image} holds over $k$ in the case that the general fiber of $\sch \ra B$ has dimension $n + 1 - \dim U$,
then 
	\autoref{theorem:hilbert-scheme-galois-image} holds over $k$ in the case that the fibers of $\sch \ra B$ have dimension at least
	$n + 1 - \dim U$.
\end{lemma}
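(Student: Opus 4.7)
The plan is to construct an auxiliary $\pgl_{n+1}(k)$-invariant family whose general fibers have dimension exactly $n+1-\dim U$, obtained by intersecting the fibers of $\sch \to B$ with general linear subspaces, and then to invoke the assumed reduced case of the theorem to conclude.

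Set $d := n+1-\dim U$. Since $\pgl_{n+1}(k)$ is connected, it preserves each irreducible component of $B$, so I work component by component and assume $B$ is irreducible with generic fiber dimension $d_0 \geq d$. If $d_0 = d$ there is nothing to prove, so suppose $e := d_0 - d > 0$. Let $G := \mathrm{Gr}(n-e,\bp^n)$ be the Grassmannian of codimension-$e$ linear subspaces of $\bp^n$, form $\widetilde B := B \times G$ with the diagonal $\pgl_{n+1}(k)$-action, and define the closed subscheme
\[
    \widetilde{\sch} := \{(b,L,p) \in \widetilde B \times \bp^n : p \in \sch_b \cap L\}.
\]
One checks that $\widetilde{\sch}$ is $\pgl_{n+1}(k)$-invariant, because both $\sch$ and the universal family of codimension-$e$ subspaces over $G$ are.

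The main mathematical input is iterated Bertini irreducibility, via \cite[Corollary 6.7(3)]{jouanolou1982theoremes}: for any $b$ with $\sch_b$ geometrically irreducible of dimension $d_0$ and $L \in G$ general, $\sch_b \cap L$ is geometrically irreducible of dimension $d_0 - e = d$. In particular, the generic fiber of $\widetilde{\sch} \to \widetilde B$ is geometrically irreducible of the expected dimension $d$. Using \cite[Th\'eor\`eme 9.7.7]{EGAIV.3} together with upper semicontinuity of fiber dimension, I pass to a dense, $\pgl_{n+1}(k)$-invariant open subscheme $\widetilde B^{0} \subset \widetilde B$ over which every fiber of $\widetilde{\sch}$ is geometrically irreducible of dimension at least $d$; the restricted family $\widetilde{\sch}|_{\widetilde B^{0}} \to \widetilde B^{0}$ then satisfies the hypotheses of the reduced case of \autoref{theorem:hilbert-scheme-galois-image}.

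Applying the hypothesis of the lemma produces a dense open $\widetilde B^{00} \subset \widetilde B^{0}$ such that $\pi_1(\sch_b \cap L \cap U) \to \pi_1(U)$ is surjective for every $(b,L) \in \widetilde B^{00}$. Since the projection $\widetilde B^{00} \to B$ is dominant, for $b$ in a dense open of $B$ there exists $L$ with $(b,L) \in \widetilde B^{00}$, and the factorization
\[
    \pi_1(\sch_b \cap L \cap U) \longrightarrow \pi_1(\sch_b \cap U) \longrightarrow \pi_1(U)
\]
then forces the second map to be surjective, as desired. The main obstacle is the bookkeeping required to verify that $\widetilde{\sch}|_{\widetilde B^{0}} \to \widetilde B^{0}$ satisfies every hypothesis of the theorem on a $\pgl_{n+1}(k)$-invariant dense open; the essential mathematical input is just iterated Bertini irreducibility.
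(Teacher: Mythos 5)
Your argument is correct, and it takes a genuinely different route from the paper's. The paper fixes a single linear subspace $H$ of codimension equal to the excess dimension, replaces the ambient projective space by $H \cong \bp^{n-d}$, applies the reduced case of \autoref{theorem:hilbert-scheme-galois-image} to $H \times_{\bp^n} \sch \to B$ inside $H$ (with $U$ replaced by $H \cap U$), and then proves $\pi_1(H \cap U) \to \pi_1(U)$ surjective as a separate input by applying the reduced case a second time, now to the universal family over the Grassmannian of $(n+1-\dim U)$-planes; the two surjectivities are combined through a commuting square. You instead keep the ambient $\bp^n$ and $U$ unchanged, enlarge the parameter space to $B \times G$ with $G$ the Grassmannian of codimension-$e$ subspaces, apply the reduced case once to the family $\widetilde{\sch} \to B \times G$, and finish with the single factorization $\pi_1(\sch_b \cap L \cap U) \to \pi_1(\sch_b \cap U) \to \pi_1(U)$. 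This is cleaner: there is no change of ambient projective space, no separate appeal to the classical Lefschetz statement, and only one invocation of the reduced case. The one spot you pass over quickly is the existence of a $\pgl_{n+1}(k)$-invariant dense open $\widetilde B^0$: this does hold, because the constructible locus of geometrically irreducible fibers is $\pgl_{n+1}(k)$-stable and contains the generic point of each component, so the closure of its complement is a $\pgl_{n+1}(k)$-stable nowhere-dense closed subset, and the complement of that closure is the desired invariant open. (The paper's proof needs an analogous implicit shrinking of $B$, together with a chosen identification of $\pgl_{n-d+1}(k)$ with a subgroup of $\pgl_{n+1}(k)$ stabilizing $H$, so this is not an extra cost of your approach.)
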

\begin{proof}
	We may first assume $B$ is integral, by passing to the reduction and considering each irreducible component separately.
	Let $\sch \ra B$ be a $\pgl_{n+1}(k)$-invariant morphism whose general fiber has dimension $n + 1 - \dim U + d$.
	Choose a codimension-$d$ plane $H$ so that for a general $b \in B$,
	$\dim H \cap \sch_b = n + 1 - \dim U$.
We want to show that for general $b \in B$, $\pi_1(\sch_b \cap U) \ra \pi_1(U)$ is surjective.
We know $\pi_1(H \cap \sch_b \cap U) \ra \pi_1(H \cap U)$ is surjective for a general $b \in B$, by applying the theorem 
to the family $H \times_{\bp^n} \sch \ra B$ 
of relative dimension $n + 1 - \dim U$
in $H \simeq \bp^{n-d}$.

Furthermore, we claim that for general $H$, the map $\pi_1(H \cap U) \ra \pi_1(U)$ is a surjection.
Indeed, this is a classical version of the Lefschetz hyperplane theorem.
To see this, choose a general plane $K$ of dimension $n + 1 - \dim U$.
For any plane codimension-$d$ plane $H \supset K$, we have maps $\pi_1(K \cap U) \ra \pi_1(H \cap U) \ra \pi_1(U)$.
The composition is surjective for general $K$ by applying our theorem in the case that our family is the universal family over the
Grassmannian of $n + 1 - \dim U$ dimensional planes, and therefore $\pi_1(H \cap U) \ra \pi_1(U)$ is surjective for general $H$.

Observe we have a factorization
\begin{equation}
	\nonumber
	\begin{tikzcd} 
		\pi_1(H \cap \sch_b \cap U) \ar {r} \ar {d}{\alpha} &  \pi_1(\sch_b \cap U) \ar {d}{\beta} \\
		\pi_1(H \cap U) \ar {r}{\gamma} & \pi_1(U).
	\end{tikzcd}\end{equation}
Choosing $H$ generally so that $\gamma$ is surjective,
we see $\alpha$ is surjective for a general $b \in B$. Hence, $\beta$ is also surjective for a general $b \in B$.
\end{proof}

We now complete the proof of our main theorem in the case $k = \bc$.
The idea is to find a sufficiently nice open subscheme $B' \subset B$,
and show we can topologically deform one member of this open to another, so that
one member surjects onto a certain finite quotient if and only if all members do.

\begin{proposition}
	\label{proposition:case-over-c}
	\autoref{theorem:hilbert-scheme-galois-image} holds in the case $k = \bc$.
\end{proposition}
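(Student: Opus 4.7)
The plan is to combine \autoref{corollary:general-pgl-translate} with a topological deformation argument using Ehresmann's fibration theorem. First, by \autoref{lemma:reduction-to-curves}, I may assume that the fibers of $\sch \to B$ have dimension exactly $n+1-\dim U$, so that for general $b\in B$ the intersection $\sch_b\cap U$ is one-dimensional. Surjectivity of $\pi_1(\sch_b\cap U)\to\pi_1(U)$ is equivalent to the statement that for every connected finite \'etale cover $W\to U$, the pullback $W\times_U(\sch_b\cap U)$ is connected. \autoref{corollary:general-pgl-translate} yields, for each single $W$, a dense open $B_W\subset B$ on which this connectedness holds; the obstacle is that connected finite \'etale covers of $U$ form an uncountable family, so one cannot simply intersect the $B_W$.

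To circumvent this, the plan is to find a single dense open $B^0\subset B$ on which the topology of the fibers $\sch_b\cap U$ is constant, simultaneously for all $W$. Concretely, combining generic smoothness with Ehresmann's fibration theorem, I would produce a dense open $B^0\subset B$ over which $\sch\times_{\bp^n}U\to B^0$ is a locally trivial $C^\infty$-fiber bundle. Since $B^0$ is connected, any path in $B^0$ from $b$ to $b'$ then induces a homotopy equivalence $\sch_b\cap U\simeq \sch_{b'}\cap U$ compatible, up to an inner automorphism of $\pi_1(U)$, with the two inclusions into $U$. In particular, for each fixed $W\to U$, the property that $W\times_U(\sch_b\cap U)$ is connected is constant in $b\in B^0$.

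Granting the local triviality, the argument is completed as follows. Fix any connected finite \'etale cover $W\to U$. By \autoref{corollary:general-pgl-translate}, the intersection $B_W\cap B^0$ is a nonempty open subset of $B^0$, so there exists some $b\in B^0$ with $W\times_U(\sch_b\cap U)$ connected. By the constancy just described, the same holds for every $b\in B^0$. Since $W$ was arbitrary, the map $\pi_1(\sch_b\cap U)\to\pi_1(U)$ is surjective for every $b\in B^0$.

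The main obstacle is the application of Ehresmann to the non-proper family $\sch\times_{\bp^n}U\to B$. The plan is to first apply Ehresmann to the proper companion family $\sch\times_{\bp^n}\overline U\to B$ after restricting to a dense open on which this map is a submersion of smooth manifolds (replacing $\overline U$ by a resolution of singularities if needed, since $U$ is only assumed normal), and then to use a Whitney stratification of the boundary $\sch\times_{\bp^n}(\overline U\setminus U)$ together with Thom's first isotopy lemma to trivialize the pair, thereby obtaining local triviality of the open complement over a possibly smaller dense open $B^0\subset B$. Once this topological trivialization is established, the rest of the proof is the clean combination of \autoref{corollary:general-pgl-translate} with the homotopy invariance of the induced map of fundamental groups sketched above.
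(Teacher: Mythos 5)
Your overall strategy matches the paper's: reduce to fibers of dimension $n+1-\dim U$ via \autoref{lemma:reduction-to-curves}, use \autoref{corollary:general-pgl-translate} to handle a single cover $W$, and then propagate that single-cover statement to all covers simultaneously by finding a dense open $B^0 \subset B$ over which the family $\sch\times_{\bp^n}U\to B$ is $C^\infty$-locally trivial, so that the image of $\pi_1(\sch_b\cap U)$ in $\pi_1(U)$ (up to inner automorphism) is constant over $B^0$.

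However, there is a gap in how you propose to obtain the local triviality, and then a divergence from the paper's route. Your first step — applying Ehresmann to $\sch\times_{\bp^n}\overline U'\to B$ after resolving singularities of $\overline U$ — does not work: resolving $\overline U$ does nothing to the fibers $\sch_b$, and since the $\sch_b$ can be everywhere singular (take $Y$ a nodal curve and $\sch$ its $\pgl_{n+1}$-orbit), the family $\sch\times_{\bp^n}\overline U'\to B$ need not be a submersion of smooth manifolds over \emph{any} dense open of $B$. Your fallback via Whitney stratification and Thom's first isotopy lemma \emph{is} essentially correct and would close the gap, provided you Whitney stratify the whole of $\overline\scc=\sch\times_{\bp^n}\overline U$ (refining the boundary $\overline\scc - \scc$), shrink $B$ so that the map restricted to each stratum is a submersion, and restrict the resulting stratified trivialization to the open part $\scc$; once that is in place the homotopy-invariance-up-to-conjugacy argument you describe finishes the proof.

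The paper avoids stratification theory entirely by a more elementary device: it normalizes the total space. Setting $\overline\scc = $ projective closure of $\scc = U\times_{\bp^n}\sch$, one takes the normalization $\widetilde{\overline\scc}\to\overline\scc$ and the reduced boundary $D=\widetilde{\overline\scc}-\widetilde\scc$. Over a dense open $B'\subset B$ (on which $D\to B$ is finite étale and $\widetilde{\overline\scc}\to B$ is smooth proper with connected one-dimensional fibers), Ehresmann applies directly and separately to $\widetilde{\overline\scc}'\to B'$ and to $D'\to B'$, yielding local triviality of $\widetilde\scc'\to B'$. The key observation that makes this substitution legitimate is that $\widetilde\scc_b\times_{\bp^n}W$ is the normalization of $\scc_b\times_{\bp^n}W$, so irreducibility of one is equivalent to irreducibility of the other — thus it suffices to trivialize the normalized family rather than $\scc$ itself. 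Both routes reach the same endpoint; the paper's trades stratification theory for a normalization argument that is specific to the one-dimensional situation one has already reduced to.
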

\begin{proof}
	By \autoref{lemma:reduction-to-curves}, we may assume the fibers of $\sch \ra B$ have dimension $n + 1 - \dim U$.
	Since $\pi_1$ is a topological invariant, we may assume $\sch$ and $B$ are reduced, by passing to their reductions.
	Further, we can reduce to the case that $B$ is integral by considering each irreducible component of $B$ separately.

In order to show surjectivity of the map $\pi_1(\sch_b \cap U) \ra \pi_1(U)$ for a general $b \in B$,
we will show that for a general $b \in B$, and every connected finite \'etale cover $W \ra U$,
the pullback $W \times_{\bp^n} \sch_b$ is irreducible.
Define $\scc := U \times_{\bp^n} \sch$. Observe we have a map $\scc \ra B$ and $\scc_b = \sch_b \cap U$.
Because the fibers of $\sch \ra B$ have dimension $n + 1 - \dim U$, the generic fiber of $\scc \ra B$ is $1$-dimensional.
The generic fiber is also irreducible by \autoref{corollary:general-pgl-translate} applied in the case that $W \ra U$ 
is the trivial cover.
Note also that $\scc_b \times_{\bp^n} W$ is a connected finite \'etale cover of $\scc_b$ as it is isomorphic to $\scc_b \times_U W$.
To conclude, we will show that for every $W \ra U$ and a general $b \in B$, $\scc_b \times_{\bp^n} W$ is irreducible.

We next construct a dense open in $B' \subset B$. 
Following the construction of $B'$, we will verify that for every $b \in B'$ and every connected finite \'etale connected cover $W \ra U$, $\scc_b \times_{\bp^n} W$ is irreducible.
Loosely speaking, $B'$ will be the locus where the fiber of the normalization of $\scc \ra B$ is the normalization of the fiber.
To construct this $B'$,
let $\overline {\scc}$ denote the projective closure of $\scc \subset B \times \bp^n$.
Let $\widetilde{\overline{\scc}}$ denote the normalization of $\overline{\scc}$, 
let $\widetilde{\scc} := \widetilde{\overline \scc} \times_{\overline \scc} \scc$,
and let $D := \widetilde{\overline \scc} - \widetilde{\scc}$ be the closed subscheme
with reduced subscheme structure.
Define the maps $\xi, \nu,$ and $\rho$ as in the diagram
\begin{equation}
	\nonumber
	\begin{tikzcd}
		D  \ar {r}\ar{rdd}{\xi}  &\widetilde{\overline{\scc}} \ar {d}{\nu} & \widetilde{\scc}  \ar[hook]{l}\ar{d} \\
		\qquad & \overline{\scc} \ar {d}{\rho} & \scc \ar[hook]{l}\ar{dl} \\
		 & B. & 
	 \end{tikzcd}\end{equation}
Since
$\overline{\scc}$ is a finite type $\bc$-scheme,
the normalization map $\nu$ is finite, and therefore $\rho \circ \nu$ is projective. 
Since $\widetilde{\overline{\scc}}$ is normal,
the generic fiber of $\widetilde{\overline \scc} \ra B$ is also normal, hence smooth.
Next, we replace $B$ by a dense open over which $\xi$ is smooth:
To see why such a dense open exists, observe that the generic fiber of $\rho \circ \nu$ is $1$-dimensional and the normalization $\nu$ is birational,
so $\xi$ has $0$-dimensional generic fiber (if it is nonempty).
By generic smoothness, there is a dense open subscheme $D^{\operatorname{sm}} \subset D$ so that $\xi|_{D^{\operatorname{sm}}}$
is smooth, and so $B - \xi(D - D^{\operatorname{sm}})$ determines a dense open of $B$ over which $\xi$ is
smooth.
After shrinking $B$ so that $\xi$ is smooth, take $B' \subset B$ to be a smooth dense open subscheme so that 
$\widetilde{\overline \scc} \times_B B' \ra B'$
is a smooth morphism whose fiber
over each $b \in B'$ is a connected smooth curve (which is possible as the generic fiber of $\scc \ra B$ is a connected smooth curve, as we showed above).
This constructs the desired dense open subscheme $B' \subset B$.

To complete the proof, we will show that
for every $b \in B'$, $\scc_b \times_{\bp^n} W$ is irreducible.
Let $D', \widetilde{\overline \scc}', \widetilde{\scc}', \overline \scc', \scc'$ denote the base changes of
$D, \widetilde{\overline \scc}, \widetilde{\scc}, \overline \scc, \scc$ along $B' \ra B$.
Observe that by construction, each fiber of $\rho \circ \nu$ 
over $b \in B'$ is an integral smooth curve, and therefore $\widetilde{\scc}_b$ is the normalization of $\scc_b$ and $\widetilde{\overline \scc}_b$
is the smooth projective completion of $\widetilde{\scc}_b$.
Since $\widetilde{\scc}_b$ is the normalization of $\scc_b$, we also have
$\widetilde{\scc}_b \times_{\bp^n} W = \widetilde{\scc}_b \times_U W$ is the normalization of $\scc_b \times_{\bp^n} W$.
Hence, in order to show $\scc_b \times_{\bp^n} W$ is irreducible, it suffices to show 
$\widetilde{\scc}_b \times_{\bp^n} W$ is irreducible.

In order to show $\widetilde{\scc}_b \times_{\bp^n} W$ is irreducible, 
we first establish that $\widetilde{\scc}' \ra B'$ is locally trivial in the $C^\infty$ topology.
Since we have assumed that $B'$ is smooth and
$\widetilde{\overline \scc}' \ra B'$ is smooth,
by Ehresmann's fibration theorem, (using crucially $k = \bc$,) the map $\widetilde{\overline \scc}' \ra B'$ is locally trivial in the $C^\infty$ topology.
Furthermore, by construction, $D' \ra B'$ is a proper \'etale map. 
Therefore, again by Ehresmann's theorem, this too is locally trivial.
These facts together imply that $\widetilde{\scc}' = \widetilde{\overline \scc}' - D' \ra B'$ is locally trivial in the $C^\infty$ topology,
since one can pass to a sufficiently small ball in $B'$ over which $\widetilde{\overline \scc}'$ and $D'$ are simultaneously
trivial.

As we have shown above, to complete the proof, it suffices to show that for all $b \in B'$, the image of
$\pi_1(\widetilde{\scc}_b) \ra \pi_1(U)$ surjects onto the finite quotient $\coker \left( \pi_1(W) \ra \pi_1(U) \right)$
corresponding to the connected finite \'etale cover $W \ra U$. 
From \autoref{corollary:general-pgl-translate},
we know there is some $c \in B'$ (in fact $c$ can be chosen generally)
so that $\scc_c \times_{\bp^n} W$ is irreducible.
Since $\widetilde{\scc}_c \times_{\bp^n} W$ is the normalization of $\scc_c \times_{\bp^n} W$,
we know $\widetilde{\scc}_c \times_{\bp^n} W$ is also irreducible. 
Thus, the image of $\pi_1(\widetilde{\scc}_c) \ra \pi_1(U)$ surjects onto $\coker \left( \pi_1(W) \ra \pi_1(U) \right)$.
In the $C^\infty$ topology,
over $k = \bc$, we can choose a smooth path $\gamma$ lying in $B'$
connecting any point $b$ to our particular point $c$.
Upon choosing a local trivialization around each point of $\gamma$, which we have shown to exist in the previous paragraph,
it suffices to show $\pi_1(\widetilde{\scc}_b) \ra \pi_1(U)$ surjects onto 
$\coker \left( \pi_1(W) \ra \pi_1(U) \right)$
when there is a $C^\infty$ local trivialization of $\widetilde{\scc}' \ra B'$ containing both $b$ and $c$.
So, we have reduced to the case that the family is trivial, in which case the image of $\pi_1^{\tp}(\widetilde{\scc}_c) \ra \pi_1^{\tp}(U)$
(where $\pi_1^{\tp}$ denotes the topological fundamental group)
agrees with that of $\pi_1^{\tp}(\widetilde{\scc}_b) \ra \pi_1^{\tp}(U)$. Hence, the images both surject onto $\coker \pi_1(W) \ra \pi_1(U)$,
completing the proof.
Here we are implicitly using that the \'etale fundamental group is the profinite completion of the topological fundamental
group, so a surjection onto a finite quotient in the topological fundamental group implies the same surjection in the \'etale fundamental group.
\end{proof}

\subsection{Reduction to the case $k=\bc$}
\label{subsection:reduction}
We conclude our proof of \autoref{theorem:hilbert-scheme-galois-image}, by deducing the statement over arbitrary fields
of characteristic $0$ from the corresponding statement over $\bc$.
This is accomplished at the end of this section in \autoref{subsubsection:proof-completion}.
Throughout, for $Z$ a scheme over $\spec R$ and $\spec A \ra \spec R$ a map of schemes, we let $Z_A := Z \times_{\spec R} \spec A$.
We start with a lemma that will allow us to reduce to the case of algebraically closed fields.
\begin{lemma}
	\label{lemma:reduction-to-algebraically-closed-fields}
	Under the notations of \autoref{theorem:hilbert-scheme-galois-image},
	suppose $k$ is a field of characteristic $0$ and \autoref{theorem:hilbert-scheme-galois-image} holds over $\overline k$.
	Then it also holds over $k$.
\end{lemma}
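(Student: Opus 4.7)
The plan is to use Galois descent from $\overline k$ to $k$. First I would base change the whole setup to $\overline k$, apply the theorem over $\overline k$ to obtain a dense open on which the surjectivity holds, then descend that open back to a dense open of $B$, and finally transfer the surjectivity via the arithmetic-to-geometric short exact sequence for the \'etale fundamental group.

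Concretely, set $U' := U_{\overline k}$, $B' := B_{\overline k}$, and $\sch' := \sch \times_k \overline k \subset B' \times \bp^n_{\overline k}$. Since $\overline k / k$ is separable (as $\ch k = 0$), normality of $U$ is preserved and $U'$ is connected because $U$ is geometrically connected. The fibers of $\sch' \to B'$ remain geometrically irreducible of dimension at least $n + 1 - \dim U'$, and the $\pgl_{n+1}(\overline k)$-invariance of $\sch' \to B'$ is inherited from the $\pgl_{n+1}(k)$-invariance of $\sch \to B$. By hypothesis, \autoref{theorem:hilbert-scheme-galois-image} holds over $\overline k$, so I obtain a dense open $\Omega \subset B'$ such that for every $\overline b \in \Omega$, the map $\pi_1(\sch'_{\overline b} \cap U') \to \pi_1(U')$ is surjective.

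Next, I would descend $\Omega$ to a dense open $B^0 \subset B$. Since $B$ is locally of finite type over $k$ and $\overline k$ is the filtered colimit of its finite Galois subextensions, the open $\Omega$ (equivalently its constructible closed complement) is already defined over some finite Galois extension $k_0 / k$ inside $\overline k$: one can write $\Omega = \Omega_0 \times_{k_0} \overline k$ for a dense open $\Omega_0 \subset B_{k_0}$. Replacing $\Omega_0$ by the intersection of its finitely many $\gal(k_0/k)$-translates (still open and dense), I may assume $\Omega_0$ is Galois-invariant, and then by descent along the finite \'etale cover $B_{k_0} \to B$, $\Omega_0$ is the pullback of a dense open $B^0 \subset B$. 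By construction, every geometric point of $B_{\overline k}$ lying above a point of $B^0$ lies in $\Omega$.

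Finally, for $b \in B^0$, I would deduce surjectivity of $\pi_1(\sch_b \cap U) \to \pi_1(U)$ from the surjectivity over $\overline k$ by means of the fundamental short exact sequence of \cite[Expos\'e IX, Th\'eor\`eme 6.1]{noopsortSGA1Grothendieck1971},
$$1 \to \pi_1(X_{\overline k}) \to \pi_1(X) \to \gal(\overline k/k) \to 1,$$
applied to both $X = U$ and $X = \sch_b \cap U$ (whose geometric connectedness I would verify using the surjectivity over $\overline k$). Comparing via the commutative square induced by $\sch_b \cap U \to U$ produces a commutative ladder in which the left vertical map is surjective (by the theorem over $\overline k$ at a geometric point above $b$ in $\Omega$) and the right vertical map is the identity on $\gal(\overline k/k)$; the five lemma then gives surjectivity of $\pi_1(\sch_b \cap U) \to \pi_1(U)$. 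The main obstacle I anticipate is the bookkeeping for points $b$ whose residue field $k(b)$ strictly contains $k$, since then $\sch_b \cap U$ is more naturally a $k(b)$-scheme and its base change to $\overline k$ can decompose according to the embeddings $k(b) \hookrightarrow \overline k$. The cleanest fix is to run the argument after replacing $k$ by $k(b)$ (so $b$ becomes a rational point), or, equivalently, to check surjectivity first at the generic point of each irreducible component of $B$ and then propagate via constructibility.
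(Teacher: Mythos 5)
Your proof is correct and takes essentially the same approach as the paper: base change to $\overline{k}$, apply the theorem over $\overline{k}$ to get a dense open, descend that open to $B$, and then transfer surjectivity via the short exact sequence of \cite[Expos\'e IX, Th\'eor\`eme 6.1]{noopsortSGA1Grothendieck1971} applied to $\sch_b \cap U$ and $U$. The only cosmetic difference is that the paper obtains $B^0$ simply as the image of the dense open $C^0$ under the (open, surjective) map $B_{\overline{k}} \to B$ rather than by explicitly spreading out to a finite Galois extension and intersecting translates, and it passes silently over the residue-field bookkeeping that you correctly flag at the end.
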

\begin{proof}
	Let $\sch \ra B$ be the given $\pgl_{n+1}(k)$-invariant scheme,
and let $\sch_{\overline k} \ra B_{\overline k}$ denote its base change to $\spec \overline k$.
Assuming \autoref{theorem:hilbert-scheme-galois-image} holds over $\overline k$,
there is some dense open
$C^0 \subset B_{\overline k}$ so that for each member $c \in C^0$, $\pi_1((\sch_{\overline k})_c \cap U_{\overline k}) \ra \pi_1(U_{\overline k})$ is surjective.
Let $B^0$ denote the image of $C^0$ under $B_{\overline k} \ra B$. Observe that $B^0 \subset B$ is a dense
open.
We claim that for any $b \in B^0$, the map
$\pi_1(\sch_b \cap U) \ra \pi_1(U)$ is surjective.
To see this, let $c$ be a point of $C^0$ mapping to $b$.
By \cite[Expos\'e IX, Th\'eor\`eme 6.1]{noopsortSGA1Grothendieck1971}, we have a map of exact sequences
\begin{equation}
	\nonumber
	\begin{tikzcd}
		0 \ar {r} & \pi_1((\sch_b \cap U)_{\overline k}) \ar {r} \ar {d} & \pi_1(\sch_b \cap U) \ar {r} \ar {d} & \pi_1(\spec k) \ar {r} \ar {d} & 0 \\
		0 \ar {r} & \pi_1(U_{\overline k}) \ar {r} & \pi_1(U) \ar {r} & \pi_1(\spec k) \ar {r} & 0.
	\end{tikzcd}\end{equation}

Since we have assumed 
$\pi_1((\sch_b \cap U)_{\overline k})=\pi_1((\sch_{\overline k})_c \cap U_{\overline k}) \ra \pi_1(U_{\overline k})$ is surjective, then the above diagram implies
$\pi_1(\sch_b \cap U) \ra \pi_1(U)$ is surjective.
\end{proof}
Although the following is surely well known and standard, 
for completeness,
we record the following lemma for verifying
surjectivity of a map on $\pi_1$. 
\begin{lemma}
\label{lemma:base-change-surjective-implies-surjective}
Suppose $X \ra Y$ is a map of finite type schemes over an algebraically
closed field $k$ of characteristic $0$ and let $L \supset k$ be another algebraically
closed field. Then, the map $\pi_1(X) \ra \pi_1(Y)$ is surjective if and only
if $\pi_1(X_L) \ra \pi_1(Y_L)$ is surjective.
\end{lemma}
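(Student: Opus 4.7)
The plan is to reduce the lemma to the standard fact that, for a connected finite type scheme $Z$ over an algebraically closed field $k$ of characteristic zero, base change to any larger algebraically closed field $L$ induces a canonical isomorphism $\pi_1(Z_L) \xrightarrow{\sim} \pi_1(Z)$; this is a classical result (cf.\ \cite[Expos\'e X]{noopsortSGA1Grothendieck1971} together with the standard complements in characteristic zero). The characteristic zero hypothesis is essential: in positive characteristic only the maximal prime-to-$p$ quotient behaves well under such extensions, and the analogous statement fails without restriction.

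Before applying the invariance, I would first pass to the connected components of $X$ and $Y$ containing the implicit basepoints, so that the statement ``$\pi_1(X)\ra \pi_1(Y)$ is surjective'' is literally defined. Since $k$ is algebraically closed, a connected finite type $k$-scheme remains connected after base change to $L$, so this reduction survives the extension of scalars. I would then choose compatible geometric basepoints of $X_L, Y_L, X, Y$ and apply the invariance to both $X$ and $Y$. Naturality of the base change map on $\pi_1$ produces a commutative square
\begin{equation*}
\begin{tikzcd}
\pi_1(X_L) \ar {r} \ar {d}{\sim} & \pi_1(Y_L) \ar {d}{\sim} \\
\pi_1(X) \ar {r} & \pi_1(Y)
\end{tikzcd}
\end{equation*}
in which both vertical arrows are isomorphisms. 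Consequently, the top horizontal arrow is surjective if and only if the bottom one is, which is exactly the conclusion of the lemma.

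In summary, the lemma is a formal corollary of the invariance of the \'etale fundamental group under extension of algebraically closed base fields in characteristic zero. The only bookkeeping needed is to guarantee connectedness of both sides so that the invariance applies, and this is automatic after fixing basepoints. I do not anticipate any genuine obstacle beyond locating and citing the appropriate form of the invariance theorem.
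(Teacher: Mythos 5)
Your proposal is mathematically correct in outline, but it takes a deliberately different route from the one the paper uses, and your citation is not quite strong enough to support it. The paper explicitly notes that one \emph{could} conclude from the fact that the vertical arrows of the square are isomorphisms, but calls this fact ``somewhat tricky'' and attributes it to a dedicated reference (\cite[Theorem 1.1]{landesman:invariance-of-the-fundamental-group}) rather than to SGA~1. The reason is that \cite[Expos\'e X, Cor.~1.8]{noopsortSGA1Grothendieck1971} proves invariance of $\pi_1$ under extension of algebraically closed base field only for \emph{proper} schemes; for non-proper finite type schemes the result is false in positive characteristic, and in characteristic zero it requires genuine extra input (resolution of singularities and a good compactification, or a comparison with the topological fundamental group), which is exactly what makes it nontrivial. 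Your phrase ``together with the standard complements in characteristic zero'' is carrying a lot of weight that would need to be unpacked.

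The paper instead gives a more elementary argument that sidesteps the full invariance statement. It first observes that the vertical maps are \emph{surjective} (this is easy: a connected $k$-scheme remains connected after base change to $L$ since $k$ is algebraically closed), which immediately gives the direction ``$\pi_1(X_L)\to\pi_1(Y_L)$ surjective implies $\pi_1(X)\to\pi_1(Y)$ surjective.'' For the converse it spreads out a connected finite \'etale cover $W_L \to Y_L$ to a cover $W_A \to Y_A$ over a finitely generated $k$-subalgebra $A\subset L$, uses constructibility of the geometrically connected locus \cite[Th\'eor\`eme 9.7.7(ii)]{EGAIV.3} to find closed $k$-points of $\spec A$ where the fiber is geometrically connected, applies the surjectivity of $\pi_1(X)\to\pi_1(Y)$ at those points, and then runs constructibility again to conclude that the generic (hence $L$-) fiber of $W_A\times_{Y_A}X_A$ is connected. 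This trades the ``tricky'' invariance theorem for a spreading-out argument plus EGA constructibility, which is arguably more self-contained. Your version is shorter if one is willing to invoke the invariance result, but you should cite it precisely (e.g.\ the paper's own reference, or some other source that handles the non-proper case), not SGA~1 Exposé X.
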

\begin{proof}
	First, observe we have a commutative square
	\begin{equation}
		\label{equation:surjectivity-base-change-factoring-out}
		\begin{tikzcd} 
			\pi_1(X_L) \ar {r} \ar {d} & \pi_1(Y_L) \ar {d} \\
			\pi_1(X) \ar {r} & \pi_1(Y).
		\end{tikzcd}\end{equation}
	Note that the vertical maps of \eqref{equation:surjectivity-base-change-factoring-out}
	are surjective because connected schemes
	remain connected upon base change between algebraically closed fields.
	Therefore, if the top map of \eqref{equation:surjectivity-base-change-factoring-out}
is surjective, the bottom is as well.
	It only remains to show the bottom map is surjective if the top
	map is.

	One can immediately deduce this from the somewhat tricky
	fact that the vertical maps of \eqref{equation:surjectivity-base-change-factoring-out}
are isomorphisms, as shown in \cite[Theorem 1.1]{landesman:invariance-of-the-fundamental-group}.

Instead, we opt for a more direct proof.
To show the top map of \eqref{equation:surjectivity-base-change-factoring-out}
is surjective, it suffices to show that for any
connected finite \'etale cover $W_L \ra Y_L$, the pullback
$W_L \times_{Y_L} X_L$ remains connected.
By writing $L$ as the limit of affine $k$-algebras,
we can spread out the cover $W_L \ra Y_L$
to a finite \'etale cover $W_A \ra Y_A$ for
$A$ a smooth affine $L$-subalgebra over $k$,
so that $W_L = W_A \times_{\spec A} \spec L$.

We will show $W_L \times_{Y_L} X_L$ is connected.
Because $W_L$ is geometrically connected, it follows that the generic fiber
of the map $W_A \ra \spec A$ is geometrically connected.
Therefore, by constructibility of the geometrically connected locus,
\cite[Th\'eor\`eme 9.7.7(ii)]{EGAIV.3},
there is a dense open set of closed points
$b \in \spec A$
so that the fibers of the map
$W_A \ra A$ are geometrically connected.
Since 
$\pi_1(X) \ra \pi_1(Y)$
is surjective,
for $b \in \spec A$ with $W_A \times_{\spec A} b$ geometrically connected,
the pullback
$(W_A \times_{\spec A} b) \times_Y X \simeq W_A \times_{Y_A} X_A \times_{\spec A} b$
is also geometrically connected.
In turn, by 
constructibility of the geometrically connected locus,
\cite[Th\'eor\`eme 9.7.7(ii)]{EGAIV.3},
it follows that the generic fiber of
$W_A \times_{Y_A} X_A \ra \spec A$ is geometrically connected.
Hence, $W_L \times_{Y_L} X_L$ is connected,
being the base change of the generic fiber of
$W_A \times_{Y_A} X_A \ra \spec A$ to $\spec L$.
\end{proof}

We now prove a lemma which will let us deal with algebraically closed fields contained in $\bc$.
\begin{lemma}
	\label{lemma:reduction-to-subfield}
Suppose  $k \subset L$ are two algebraically closed fields of characteristic $0$. If 
	\autoref{theorem:hilbert-scheme-galois-image}
	holds over
	$L$ then it also holds over $k$.
\end{lemma}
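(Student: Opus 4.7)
The plan is to follow the strategy of \autoref{lemma:reduction-to-algebraically-closed-fields}: base change the data from $k$ to $L$, apply the assumed theorem over $L$, and descend surjectivity back to $k$ using \autoref{lemma:base-change-surjective-implies-surjective}. First I would base change the given $(U, B, \sch)$ along $\spec L \to \spec k$ to obtain $(U_L, B_L, \sch_L)$. Since the $\pgl_{n+1}(k)$-action on $B$ comes from a group-scheme action of $\pgl_{n+1,k}$, base change produces a $\pgl_{n+1}(L)$-action on $B_L$ under which $\sch_L$ is invariant; the fibers of $\sch_L \to B_L$ remain geometrically irreducible of dimension at least $n+1-\dim U$, and $U_L$ remains normal, geometrically connected, and quasi-projective. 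Applying \autoref{theorem:hilbert-scheme-galois-image} over $L$ yields a dense open $C^0 \subset B_L$ such that for each $c \in C^0$, $\pi_1((\sch_L)_c \cap U_L) \to \pi_1(U_L)$ is surjective.

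Let $\pi \colon B_L \to B$ be the base-change projection. After reducing to $B$ integral by treating irreducible components separately, I would spread $C^0$ out: since $B_L \setminus C^0$ is quasi-compact, there is a finitely generated $k$-subalgebra $A \subset L$ and a dense open $C^0_A \subset B_A := B \times_k \spec A$ with $C^0 = C^0_A \times_{\spec A} \spec L$. The morphism $\pi_A \colon B_A \to B$ is of finite type, so by Chevalley's theorem the image $\pi_A(C^0_A)$ is constructible in $B$, and it is also dense since the generic point of $B_A$ lies in $C^0_A$ and maps to the generic point of $B$. A dense constructible subset contains a dense open, so I can choose a dense open $B^0 \subset \pi_A(C^0_A)$ as my candidate.

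Finally, I verify surjectivity at each closed point $b \in B^0$ (the extension to non-closed $b$ follows by the SGA 1 exact-sequence argument used in \autoref{lemma:reduction-to-algebraically-closed-fields}). Since $k$ is algebraically closed, $\kappa(b) = k$ and the fiber $\pi_A^{-1}(b) \cong \spec A$ is not entirely contained in $B_A \setminus C^0_A$, so its generic point lies in $C^0_A$. The fiber $\pi^{-1}(b) = \spec(k \otimes_k L) = \spec L$ consists of a single point $c$ of $B_L$, and $c$ projects under $B_L \to B_A$ to this generic point, so $c \in C^0$. Identifying $(\sch_L)_c \cap U_L \simeq (\sch_b \cap U)_L$, the surjectivity at $c$ becomes surjectivity of $\pi_1((\sch_b \cap U)_L) \to \pi_1(U_L)$, and applying \autoref{lemma:base-change-surjective-implies-surjective} with $X = \sch_b \cap U$ and $Y = U$ then yields surjectivity of $\pi_1(\sch_b \cap U) \to \pi_1(U)$. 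The main obstacle is the descent step: producing $B^0$ via spreading out and Chevalley's theorem, and verifying that the unique $L$-lift of a closed $b \in B^0$ actually factors through $C^0$; this is essentially the argument left tacit in the analogous step of \autoref{lemma:reduction-to-algebraically-closed-fields}.
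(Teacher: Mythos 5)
Your proof follows the same strategy as the paper's: base change to $L$, apply the theorem over $L$ to obtain a dense open $C^0 \subset B_L$, take $B^0$ to be (related to) the image of $C^0$ in $B$, and descend surjectivity at each $b \in B^0$ via \autoref{lemma:base-change-surjective-implies-surjective} applied to the square comparing $\pi_1((\sch_b \cap U)_L) \to \pi_1(U_L)$ with $\pi_1(\sch_b \cap U) \to \pi_1(U)$. Where you differ is in the level of detail at the ``image contains a dense open'' step: the paper simply names $B^0$ as the image of $C^0$ and moves on, whereas you spread out to a finitely generated $k$-subalgebra $A \subset L$ and invoke Chevalley's theorem on the finite-type morphism $B_A \to B$ to produce a dense open, then chase the unique $L$-lift of a closed point through $C^0_A$; this fills a genuine gap that the paper leaves implicit (the map $B_L \to B$ itself is not of finite type, so Chevalley does not apply directly to it). One small quibble: your appeal to ``the SGA 1 exact-sequence argument'' to cover non-closed $b \in B^0$ is not quite the right tool, since that exact sequence (used in \autoref{lemma:reduction-to-algebraically-closed-fields}) compares $\pi_1$ over a field with $\pi_1$ over its algebraic closure, not closed points with generic points of $B^0$; the paper's own proof handles arbitrary $b$ by choosing any $c$ over $b$ and using the identification $(\sch_L)_c \cap U_L \simeq (\sch_b \cap U)_L$, though that identification is itself only transparent when $\kappa(b) = k$, so both treatments are somewhat informal on this point.
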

\begin{proof}
Let $\sch \ra B$ denote our given family over $k$ and let $C^0 \subset B_L$ denote a dense open 
subscheme given by \autoref{theorem:hilbert-scheme-galois-image}
so that for $c \in C^0$, $\pi_1((\sch_L)_c \cap U_L) \ra \pi_1(U_L)$
is surjective.
Let $B^0$ denote the image of $C^0$ in $B$.
Say $c \in C^0$ maps to a point $b \in B^0$. Then, under the identification
$(\sch_L)_c \cap U_L \simeq (\sch_b \cap U)_L$,
we obtain a commutative square
\begin{equation}
	\label{equation:square-algebraically-closed-base-change-from-below}
	\begin{tikzcd} 
		\pi_1((\sch_b \cap U)_L) \ar {r} \ar {d} & \pi_1(U_L) \ar {d} \\
		\pi_1(\sch_b \cap U) \ar {r} & \pi_1(U).
	\end{tikzcd}\end{equation}
Since the top map is surjective by assumption, it follows from 
\autoref{lemma:base-change-surjective-implies-surjective}
that the bottom map is then surjective for all $b \in B^0$,
completing the proof.
\end{proof}

We next prove a lemma that will let us deal with fields containing $\bc$.
\begin{lemma}
	\label{lemma:to-bigger-field}
	Suppose $L$ is an algebraically closed field of characteristic $0$ with an inclusion $\bc \ra L$. If 
	\autoref{theorem:hilbert-scheme-galois-image}
	holds over
	$k = \bc$ then it also holds over $L$.
\end{lemma}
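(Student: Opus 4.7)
The plan is to spread the given $L$-family down to a finitely generated $\bc$-subalgebra $A\subset L$, deduce the theorem over the algebraic closure of $\operatorname{Frac}(A)$ by means of an abstract embedding into $\bc$ and \autoref{lemma:reduction-to-subfield}, and then transfer the conclusion to $L$ via fiber-by-fiber use of \autoref{lemma:base-change-surjective-implies-surjective}.

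More precisely, using standard limit and spread-out arguments (for instance from \cite{EGAIV.3}), I would first produce a finitely generated $\bc$-subalgebra $A\subset L$, which we may take to be a domain, together with a spread-out family $(U_A,B_A,\sch_A)$ over $\spec A$ carrying a $\pgl_{n+1,A}$-action, whose base change along $\spec L\to\spec A$ recovers $(U_L,B_L,\sch_L)$ and such that, after possibly shrinking $\spec A$, all hypotheses of \autoref{theorem:hilbert-scheme-galois-image} are satisfied fiberwise over $\spec A$. Let $\bar K$ be the algebraic closure of $\operatorname{Frac}(A)$ inside $L$. Then $\bar K$ is an algebraically closed field of characteristic zero of cardinality $|\bc|=2^{\aleph_0}$, so by the classification of algebraically closed fields it abstractly embeds into $\bc$; through any such embedding, \autoref{lemma:reduction-to-subfield} combined with the hypothesis that \autoref{theorem:hilbert-scheme-galois-image} holds over $\bc$ yields that the theorem also holds over $\bar K$. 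Applying this theorem to the base-changed family $(U_{\bar K},B_{\bar K},\sch_{\bar K})$ produces a dense open $B^0_{\bar K}\subset B_{\bar K}$ over which $\pi_1(\sch_{\bar K,c}\cap U_{\bar K})\to\pi_1(U_{\bar K})$ is surjective.

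Finally, I set $B^0:=B^0_{\bar K}\times_{\bar K}L\subset B_L$, which is dense open in $B_L$ since field base change preserves density. For any $b\in B^0$ with image $c\in B^0_{\bar K}$, the fiber-product identifications $\sch_{L,b}\cap U_L\cong(\sch_{\bar K,c}\cap U_{\bar K})\times_{\kappa(c)}\kappa(b)$ and $U_L\cong U_{\bar K}\times_{\bar K}L$, combined with \autoref{lemma:base-change-surjective-implies-surjective} applied to the algebraically closed extension $\bar K\subset L$, transport the surjectivity at $c$ to the required surjectivity $\pi_1(\sch_{L,b}\cap U_L)\to\pi_1(U_L)$. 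The main technical obstacle I anticipate is this final transfer step when $c\in B^0_{\bar K}$ is not a closed point and $\kappa(c)$ is not algebraically closed: in that case one first passes to a geometric point above $c$ so as to apply \autoref{lemma:base-change-surjective-implies-surjective} between algebraically closed fields, and then descends back to $b$.
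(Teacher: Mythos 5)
Your proposal is correct and ends up in the same place as the paper, but it takes a genuinely different intermediate route. The paper spreads the $L$-family out to a finitely generated $\mathbb Z$-algebra $A$, then chooses inclusions $A \hookrightarrow \bc \hookrightarrow L$ so that the family over $L$ is literally a base change from $\bc$, and applies the $\bc$-case directly. You instead spread out to a finitely generated $\bc$-subalgebra $A \subset L$, form $\bar K := \overline{\operatorname{Frac}(A)} \subset L$, observe by transcendence degree that $\bar K$ embeds abstractly into $\bc$, and feed that embedding into \autoref{lemma:reduction-to-subfield} to obtain the theorem over $\bar K$. Both arguments then transfer the dense open up to $L$ via \autoref{lemma:base-change-surjective-implies-surjective}. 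Your detour through $\bar K$ buys something: since $\bar K$ is an honest subfield of $L$, you avoid the slightly delicate step (glossed over in the paper's wording ``Choosing inclusions $A \ra \bc \ra L$'') of arranging the embedding $A \hookrightarrow \bc$ to be compatible with $A \hookrightarrow L$ and the given $\bc \hookrightarrow L$, which in general requires re-choosing the inclusion $\bc \hookrightarrow L$. The cost is an extra invocation of \autoref{lemma:reduction-to-subfield} that the paper's route does not need. Your flag about non-closed points $c$ (where $\kappa(c)$ is not algebraically closed, so \autoref{lemma:base-change-surjective-implies-surjective} does not apply verbatim) is well-taken; the paper's proof carries exactly the same imprecision, so this is a shared gloss rather than a defect specific to your argument, though your suggested fix of ``descending back to $b$'' would need to be made precise.
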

\begin{proof}
	Retain the notation of \autoref{theorem:hilbert-scheme-galois-image}.
	By spreading out, we can find a finitely generated $\bz$-algebra $A$, a morphism $\sch_A \ra B_A$ over $A$, and a scheme $U_A$ over $A$ so that
$\sch \ra B$ and $U$ are the base changes of $\sch_A \ra B_A$ and $U_A$ along the map $\spec L \ra \spec A$.
Choosing inclusions $A \ra \bc \ra L$, we obtain $\sch_\bc \ra B_\bc$ and $U_\bc$ so that
$\sch \ra B$ and $U$ are the base change 
$\sch_\bc \ra B_\bc$ along $\spec L \ra \spec \bc$.
Assuming \autoref{theorem:hilbert-scheme-galois-image} holds over $\bc$,
then we can find some dense open $B^0_\bc \subset B_\bc$ 
so that for any $b \in B^0_\bc$, the map $\pi_1(\sch_b \cap U_\bc) \ra \pi_1(U_\bc)$ is a surjection.
Then, for any $c \in (B^0_\bc)_L$, let $b \in B^0_\bc$ be the image of $c$ along $(B^0_\bc)_L \ra B^0_\bc$.
Because $\sch_c \cap U = (\sch_b \cap U_\bc)_L$, we obtain a commuting square
\begin{equation}
	\label{equation:square-algebraically-closed-base-change}
	\begin{tikzcd} 
		\pi_1((\sch_b \cap U_\bc)_L) \ar {r} \ar {d} & \pi_1(U) \ar {d} \\
		\pi_1(\sch_b \cap U_\bc) \ar {r} & \pi_1(U_\bc).
	\end{tikzcd}\end{equation}
Since the bottom map is surjective by assumption, it follows from
\autoref{lemma:base-change-surjective-implies-surjective}
that the top map is surjective.
\end{proof}

\subsubsection{Proof of \autoref{theorem:hilbert-scheme-galois-image}}
\label{subsubsection:proof-completion}
Combining the above lemmas with the knowledge that our main theorem holds over $k = \bc$,
we complete the proof of \autoref{theorem:hilbert-scheme-galois-image}.

\begin{proof}[Proof of \autoref{theorem:hilbert-scheme-galois-image}]
	By \autoref{proposition:case-over-c}, \autoref{theorem:hilbert-scheme-galois-image} holds if $k = \bc$.
	By \autoref{lemma:to-bigger-field}, 
	the main theorem then holds over any algebraically closed field containing $\bc$.
	Then, by \autoref{lemma:reduction-to-subfield},
\autoref{theorem:hilbert-scheme-galois-image}
	holds over any algebraically closed field of characteristic $0$,
	because every algebraically closed field of characteristic $0$
	has an injection into some algebraically closed field containing
	$\bc$.
	Finally, if $k$ is any field of characteristic $0$, since 
	\autoref{theorem:hilbert-scheme-galois-image}
	holds over $\overline k$,
	it follows from
	\autoref{lemma:reduction-to-algebraically-closed-fields}
	that
	\autoref{theorem:hilbert-scheme-galois-image}
	holds over $k$.
\end{proof}

\section{Acknowledgements}
I thank David Zureick-Brown for introducing me to the subject of Galois representations, which led to the genesis of this paper.
I thank David Zywina for writing \cite[Lemma 5.2]{zywina2010hilbert}, which was the inspiration for this paper, and
James Tao for help understanding the proof of \cite[Lemma 5.2]{zywina2010hilbert}.
I thank J\'anos Koll\'ar and Wenhao Ou for several helpful comments.
I would like to thank Ravi Vakil for listening to my argument and helping me fix several technical points.
I thank Jason Starr for pointing out past work on this subject, particularly J\'anos Koll\'ar's papers. I thank Brian Conrad and Jason Starr
for teaching me about properties
of the \'etale fundamental group upon base change.
I would to thank Fran\c cios Charles, Dougal Davis, 
Laurent Moret-Bailly,
Arpon Raksit, Eric Riedl, Zev Rosengarten, 
and Ashvin Swaminathan for helpful conversations.
This material is based upon work supported by the National Science Foundation Graduate Research Fellowship Program under Grant No. DGE-1656518. 

\bibliographystyle{alpha}
\bibliography{/home/aaron/Dropbox/master}

\end{document}